\definecolor{myblue}{RGB}{0,0,0}
\definecolor{mygreen}{RGB}{0,0,0}
\newtheorem{theorem}{Theorem}[section]
\newtheorem{lemma}[theorem]{Lemma}
\newtheorem{proposition}[theorem]{Proposition}
\newtheorem{corollary}[theorem]{Corollary}
\newtheorem{algorithm}[theorem]{Algorithm}
\journal{}
\begin{document}

\begin{frontmatter}


	%

	\title{Encoding and Enumerating Acyclic Orientations of Graphs}


	\address[a]{Department of Mathematics and Statistics, Florida International University, 11200 SW 8th Street, Miami, FL 33199 U.S.A.}

	\author[a]{Walter Carballosa \corref{x}}
	\ead{wcarball@fiu.edu} \cortext[x]{Corresponding author.}

	\author[a]{Jessica Khera}
	\ead{jkhera@fiu.edu}

	\author[a]{Francisco Reyes}
	\ead{freyes@fiu.edu}

	\begin{abstract}
	In this work we study the acyclic orientations of graphs. We obtain an encoding of the acyclic orientations of the complete $p$-partite graph with size of its parts $n_1,n_2,\ldots,n_p$ via a vector with $p$ symbols and length $n=n_1+n_2+\ldots+n_p$ when the parts are fixed but not the vertices in each part. 
	We also give a recursive way to construct all acyclic orientations of a complete multipartite graph, this construction can be done by computer easily in order $\mathcal{O}(n)$. 
	Furthermore, we obtain a closed formula for non-isomorphic acyclic orientations of both the complete multipartite graphs and the complete multipartite graphs with a directed spanning tree. 
    Moreover, we obtain a closed formula for the number of acyclic orientations of a complete multipartite graph $K_{n_1,\ldots,n_p}$ with labelled vertices.  
    Finally, we obtain a way encode all acyclic orientations of an arbitrary graph as a permutation code. Using the codification mentioned above we obtain sharp upper and lower bounds of the number of acyclic orientations of a graph.
	\end{abstract}

	\begin{keyword}
		Acyclic graphs \sep encoding graphs \sep multipartite graphs \sep enumerating graphs
		\MSC[2020] 05B30 \sep 05C20 \sep 05C30
	\end{keyword}
\end{frontmatter}


\
\section{Introduction}
\label{sec:intro}

Throughout this paper, we will focus our attention on finite simple graphs $G$, with $V(G)$ denoting the set of vertices of the graph and $E(G)$ denoting the set of edges. A multipartite graph is a graph whose vertices can be partitioned into independent sets (parts). A multipartite graph with $k$ parts is referred to as a $k$-partite graph, and the complete $k$-partite graph is the graph with $k$ parts which contains an edge between every pair of vertices from different parts. This is denoted $K_{n_1, n_2, ... , n_k}$, where each $n_i$ is the number of vertices within part $V_i$.

An orientation of a graph is an assignment of a direction to each edge in some way. An orientation of a graph is said to be \emph{acyclic} if the assignment does not form any directed cycles. In an orientation of a graph a vertex is said to be a \emph{source} if its in-degree is zero, and similarly, a vertex is said to be a \emph{sink} if its out-degree is zero. We denote the multinomial coefficients by ${n \choose a_1, a_2, \dots, a_k}= \frac{n!}{a_1! \cdot a_2! \dots a_k!}$ where $n=\sum_{i=1}^k a_i$ and $a_1, a_2, \dots, a_k$ are nonnegative integers. 
A set of vertices of a graph $G$ is independent if its vertices are pairwise nonadjacent. The independence number of a graph $G$, denoted by $\alpha(G)$, is the cardinality of a largest independent set of $G$. 
Similarly, we denote the chromatic number of $G$, by $\chi(G)$. The chromatic number of $G$ is the smallest number of colors needed to color the vertices of $G$ so that no two adjacent vertices share the same color. 
Similarly, $\chi_G$ denotes the chromatic polynomial of $G$. The chromatic polynomial $\chi _{G}$ at a positive integer $\lambda$, $\chi_G(\lambda)$, is defined by the number of $\lambda$-colorings of the graph $G$. 

Acyclic orientations of a graph have been widely studied since the last century. One of the well-known results about acyclic orientations of a graph, the Gallai-Hasse-Roy-Vitaver theorem \cite[Theorem 3.13, p. 42]{NO}, was discovered (several times) during the 60's by these four authors above. The theorem states that the chromatic number of a connected graph equals one more than the length of the longest path of an acyclic orientation chosen to minimize this path length. 
In 1973, Stanley \cite{St} obtains that the number of acyclic orientations of a graph $G$ with order $n$ is $(-1)^n \chi_G(-1)$. 
In 1986, Linial \cite{L} used this result to prove that the number of acyclic orientation of a graph is an \#P-complete problem. 
In 1984, Johnson \cite{Jo} investigated the relationship between certain acyclic orientations and network reliability. In that paper, Johnson describes how generating the acyclic orientations of a network with a unique source can be used in computing its reliability. Other constructive methods for listing the acyclic orientation of a graph $G$ were investigated in \cite{PR,SSW,Sq1,Sq2}. Squire's algorithm in \cite{Sq2} requires $\mathcal{O}(n)$ time per acyclic orientation generated. 
Acyclic orientations of complete bipartite graphs were studied in \cite{KLM} and \cite{W}, and here, in this work, we extend to complete multipartite graphs.

In this paper we study the acyclic orientations of complete multipartite graphs. 
In Section \ref{Sect_complete}, we focus primarily on graphs with unlabelled vertices. We obtain a recursive way to construct all acyclic orientations of a complete multipartite graph with unlabelled vertices, and this construction can be done easily with a computer in order $\mathcal{O}(n)$ supported on the encoding given by Theorem \ref{th:coding} that is shown in \eqref{eq:code_function}. 
Additionally, Theorem \ref{th:non-isomorphics} counts the number of non-isomorphic acyclic orientations of the complete multipartite graphs; in particular, we obtain that $B(n_1,\ldots,n_p)=\frac{{n_1+\ldots+n_p\choose n_1,\ldots,n_p}}{r_1!r_2!\ldots r_s!}$ where $r_1,r_2,\ldots,r_s$ are the distinct numbers which represent the sizes of the parts in $K_{n_1,\ldots,n_p}$. Theorem \ref{th:span_trees} gives a closed formula for non-isomorphic acyclic orientations of the complete multipartite graphs which contain a directed spanning tree, $\mathcal C(n_1,\ldots,n_p)=\displaystyle \frac{{n_1+\ldots+n_p \choose n_1,\ldots,n_p}}{r_1!\ldots r_s!}  \cdot \frac{(n_1+\ldots+n_p)^2-(n_1^2+\ldots+n_p^2)}{(n_1+\ldots+n_p)^2-(n_1+\ldots+n_p)}$.
In Section \ref{Sect_labelled}, we deal with the acyclic orientations of complete multipartite graphs with labelled vertices. 
In this sense, we obtain a closed formula for the ordinary generating functions $\mathcal{F}(x_1,x_2,\ldots,x_p):=\displaystyle\sum_{k_1,\ldots,k_p\in\mathbb{N}} X_{k_1,k_2,\ldots,k_p}\,x_1^{k_1}x_2^{k_2}\ldots x_p^{k_p}$, where $X_{k_1,\ldots,k_p}$ is the number of strings in the alphabet $\{s_1,s_2,\ldots,s_p\}$ with $k_1$ characters $s_1$, $k_2$ characters $s_2$, and so on, with $k_p$ characters $s_p$ such that no two consecutive characters are the same. 
In Theorem \ref{th:acyc_labelled}, we obtain a closed formula for the number of acyclic orientation of a complete multipartite graph $K_{n_1,\ldots,n_p}$ with labelled vertices. 
Finally, we discuss the longest paths in acyclic orientations of complete multipartite graphs. Proposition \ref{p:length} gives that the length of the longest path in an acyclic orientation $\mathcal K$ of a complete multipartite graph is the size of the partition induced by $R_{\mathcal K}$ minus one. Furthermore, the number of longest paths in $\mathcal K$ is given by the multiplication of the sizes of each part of the partition induced by $R_{\mathcal K}$. The relation $R_{\mathcal K}$ is defined on $V(K_{n_1,\ldots,n_p})$ such that two vertices are related by $R_{\mathcal K}$ if the two vertices are represented in the code within a sub-string of consecutive and identical codes.

\section{Encoding acyclic orientations of complete multipartite graphs}\label{Sect_complete}

In this Section, we deal with encoding the acyclic orientations of the complete multipartite graphs with labelled parts but unlabelled vertices. The encoding we obtain allows us to count the number of non-isomorphic acyclic orientations of a complete multipartite graph and also for non-isomorphic acyclic orientations of a complete multipartite graphs which contain a directed spanning tree, \emph{i.e.}, an acyclic orientation with a unique source.

\begin{lemma}\label{l:ss}
 In every acyclic orientation of a complete multipartite graph there are both source and sink vertices.
 
 Furthermore, all sources (sinks, respectively) are vertices in the same part of the multipartite graph, whenever there are more than one source (sink, respectively).
\end{lemma}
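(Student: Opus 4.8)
The plan is to treat the two assertions separately, establishing first the mere existence of a source and a sink, and then the sharper structural claim that all sources (respectively, all sinks) concentrate in one part. First I would invoke the standard fact that a finite acyclic orientation admits a topological ordering of its vertices $v_1, v_2, \ldots, v_N$, where $N = n_1 + \cdots + n_p$, in which every directed edge points from a smaller index to a larger one; equivalently one may argue by taking a maximal directed path. Reading off this ordering, $v_1$ receives no incoming edge and is thus a source, while $v_N$ emits no edge and is thus a sink. Since for $p \geq 2$ every vertex of $K_{n_1,\ldots,n_p}$ is adjacent to each vertex lying in a different part, every vertex is incident to at least one edge, so these are genuine sources and sinks and both types occur. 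I would note that this first part uses nothing about completeness and holds for any acyclic orientation of a finite graph without isolated vertices.

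The structural claim is where the hypothesis that the graph is complete multipartite enters, via a short adjacency argument. Suppose, toward a contradiction, that two distinct sources $u$ and $v$ lie in different parts, say $u \in V_i$ and $v \in V_j$ with $i \neq j$. In a complete multipartite graph vertices from distinct parts are adjacent, so the edge $\{u,v\}$ exists and in the given orientation is directed either $u \to v$ or $v \to u$. In the first case $v$ has positive in-degree, and in the second $u$ does, contradicting in each case the assumption that both were sources. Hence any two sources belong to a common part, and whenever more than one source exists they all share that part. The argument for sinks is identical, replacing in-degree by out-degree throughout.

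The argument carries essentially no technical difficulty, so the only point requiring care is the treatment of the degenerate and boundary behaviour, and the reason the conclusion is phrased part-wise rather than as a uniqueness statement. Two sources in the \emph{same} part are nonadjacent, so the contradiction does not arise there; this is precisely why multiple sources can and do occur, and why the claim only forces them into a single part. For $p = 1$ the graph has no edges, every vertex is trivially both a source and a sink, and the ``same part'' conclusion holds vacuously. The hard part, insofar as there is one, is therefore merely recognizing that two would-be sources in distinct parts are forced to be adjacent, after which the directed edge between them immediately rules out one of the two.
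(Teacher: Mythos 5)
Your proof is correct, but it takes a genuinely different and considerably more economical route than the paper. The paper proves existence of a source by a three-stage induction: first an induction on $n_1+n_2$ for the bipartite case, then a Pigeonhole argument combining the three induced bipartite orientations for the tripartite case, and finally an induction on $p$ that reduces the general case to the tripartite one by merging parts; the ``all sources lie in one part'' claim is then extracted somewhat implicitly from that analysis. You instead get existence for free from the standard topological-ordering (equivalently, maximal directed path) fact for finite acyclic digraphs, and you get the structural claim from a one-line adjacency argument: two sources in different parts of a complete multipartite graph would be joined by an oriented edge, forcing one of them to have positive in-degree. Your adjacency argument is actually sharper and more transparent than the paper's closing sentence, and your existence argument is more general (it applies to any finite acyclic orientation, which is essentially what the paper later re-derives as its Corollary on non-empty graphs by embedding into a tournament). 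What the paper's longer route buys is self-containment and foreshadowing: the source-peeling inductions mirror the recursive source-removal decomposition that drives the encoding theorem and the recursion for $\mathcal{A}(n_1,\ldots,n_p)$ later in the paper, whereas your proof imports the topological-ordering fact as a black box. Both are valid; yours is the proof one would normally want for this lemma in isolation.
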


This is a well-known result on acyclic orientations for an arbitrary graph. However, we present this particular proof for complete multipartite graphs to present self-contained work.

\begin{proof}
  Let us consider $K_{n_1,n_2,\ldots,n_p}$, a complete $p$-partite graph, $n_i\ge1$ for every $1\le i\le p$. We show here below a proof for the existence of a source. Note that the existence of a sink is analogous since by reversing the orientations in any acyclic orientation of a graph, the resulting orientation is still acyclic and a source in the reversed orientation is a sink in the original orientation. First, we claim the result is true for $p=2$.
  
  \medskip

  {\bf Claim 1:} \emph{There is a source in every acyclic orientation of $K_{n_1,n_2}$ for every $n_1,n_2\ge1$}.
  
  \noindent 
  We prove this by induction on $n_1+n_2\ge2$. Note that there is a source in an acyclic orientation of $K_{1,1}$. 
  Assume now that there is a source for every acyclic orientation of $K_{n_1,n_2}$ with $n_1+n_2=n\ge2$. 
  Consider an acyclic orientation of $G:=K_{n_1^*,n_2^*}$ with order $n_1^*+n_2^*=n+1$ and a vertex $v$ in Part 1 (Part with size $n_1^*$). 
  If one of the parts of $G$ has only one vertex, $G$ is a star; and consequently, we have either the central vertex of the Star is a source or one of the other vertices is. 
  Assume, $n_1^*,n_2^*>1$. 
  Hence, there is a source by assumption in the orientation obtained by removing $v$ from $G$, \emph{i.e.}, in $G^\prime:=G - \{v\}$. 
  If one of its sources is a vertex in Part 1, the result is done. 
  So, we can assume the source in $G^\prime$ is a vertex in Part 2. 
  Denote that vertex, $w$. 
  Note that if the orientation assigned to edge $vw$ is oriented from $w$ to $v$, then $w$ is a source in $G$, and the result is given. 
  Assume the orientation assigned to edge $vw$ is oriented from $v$ to $w$.
  There is a source by assumption in the orientation obtained by removing $w$ from $G$, \emph{i.e.}, in $G^{\prime\prime}:=G - \{w\}$. If one of the source in $G^{\prime\prime}$ is a vertex in Part 2, the result is done. We can assume the source in $G^{\prime\prime}$ is a vertex in Part 1. Denote that vertex, $v^\prime$.  
  If $v$ is a source of $G^{\prime\prime}$, then $v$ is a source in $G$. 
  Thus, assume $v$ is not a source in $G^{\prime\prime}$ and there exists a vertex $w^\prime$ in Part 2 such that edge $vw^\prime$ is oriented from $w^\prime$ to $v$ creating a cycle $v\to w\to v^\prime\to w^\prime$ which is impossible. Therefore, either $w$ or $v$ is a source in $G$. 
  Note that could be more than one source, but in that case all the sources are in the same part.

  \medskip

  {\bf Claim 2:} \emph{There is a source in every acyclic orientation of $K_{n_1,n_2,n_3}$ for every $n_1,n_2,n_3\ge1$}.
  
  \noindent 
  Let $A,B,C$ be the three parts of $K_{n_1,n_2,n_3}$ with corresponding sizes $n_1,n_2,n_3$, respectively. 
  Let $\mathcal{K}$ be an acyclic orientation of $K_{n_1,n_2,n_3}$.  
  Consider $A-B$ ($B-C$ and $C-A$, respectively) the acyclic orientation in $K_{n_1,n_2}$ ($K_{n_2,n_3}$ and $K_{n_3,n_1}$, respectively) considering only Parts $A\cup B$ ($B\cup C$ and $C\cup A$, respectively).
  By Claim 1, there is a source in each of $A-B$, $B-C$ and $C-A$. Since $\mathcal{K}$ is an acyclic orientation, the orientation of the sources cannot be all three $A\to B$, $B\to C$ and $C\to A$; nor its corresponding inverse orientations. 
  However, two of its sources (out of the three sub-orientations) must have consecutive orientations by the Pigeonhole Principle. Without loss of generality we can assume that there is a source in $A-B$ that is a vertex in Part $A$ and a source in $B-C$ that is a vertex in Part $B$, see Figure \ref{fig:2sources}.
  Thus, the source of $A-B$ in Part $A$ is also a source in $C-A$; otherwise, there is a cycle in $\mathcal{K}$ with length three.

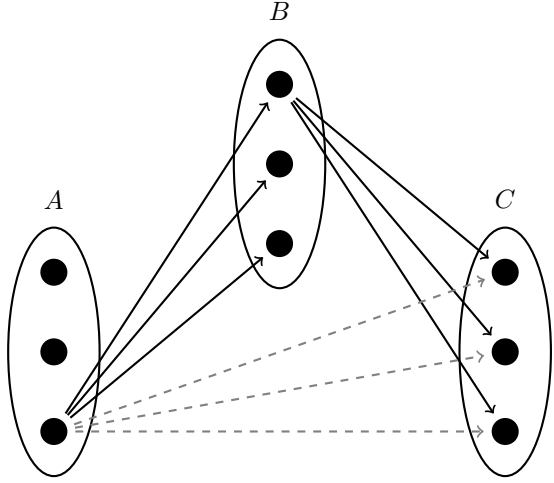
\begin{figure}
\centering
\begin{tikzpicture}[thick,
  every node/.style={draw,circle},
  fsnode/.style={fill=myblue},
  ssnode/.style={fill=mygreen},
  every fit/.style={ellipse,draw,inner sep=-2pt,text width=1cm},
  ->,shorten >= 3pt,shorten <= 3pt
]

\begin{scope}[start chain=going below,node distance=7mm]
\foreach \i in {1,2,3}
  \node[fsnode,on chain] (f\i){}; 
\end{scope}

\begin{scope}[xshift=3cm,yshift=2.5cm,start chain=going below,node distance=7mm]
\foreach \i in {4,5,6}
  \node[ssnode,on chain] (s\i){}; 
\end{scope}

\begin{scope}[xshift=6cm,yshift=0.0cm,start chain=going below,node distance=7mm]
\foreach \i in {7,8,9}
  \node[ssnode,on chain] (t\i){}; 
\end{scope}

\node [myblue,fit=(f1) (f3),label=above:$A$] {};
\node [mygreen,fit=(s4) (s6),label=above:$B$] {};
\node [mygreen,fit=(t7) (t9),label=above:$C$] {};

\draw (f3) -- (s4);
\draw (f3) -- (s5);
\draw (f3) -- (s6);
\draw (s4) -- (t7);
\draw (s4) -- (t8);
\draw (s4) -- (t9);
\draw[color=gray,dashed] (f3) -- (t7);
\draw[color=gray,dashed] (f3) -- (t8);
\draw[color=gray,dashed] (f3) -- (t9);
\end{tikzpicture}

\caption{Two sources in consecutive orientations $A\to B$ and $B\to C$ for visualizing the argument in Claim 2.}
\label{fig:2sources}
\end{figure}

  \medskip

  {\bf Claim 3:} \emph{There is a source in every acyclic orientation of $K_{n_1,n_2,\ldots,n_p}$ for every $n_1,n_2,\ldots,n_p\ge1$ and $p\ge2$}.
  
  \noindent 
  Claims 1 and 2 give the result for $p=2,3$. 
  Assume now that Claim 3 is true for some $p\ge3$.  
  Consider $G:=K_{n_1,\ldots,n_p,n_{p+1}}$ and let $\mathcal{K}$ be an acyclic orientation of $G$.
  Let $P_i$ be the part of $K_{n_1,\ldots,n_p,n_{p+1}}$ with corresponding sizes $n_i$, for every $1\le i\le p+1$. 
  Hence, there is a source by assumption in the orientation $\mathcal{K}^\prime$ obtained by removing the part $P_{p+1}$ from $G$, \emph{i.e.}, in $G^\prime:=G - P_{p+1}$. Without loss of generality we can assume that such sources are vertices in $P_1$. Similarly, there is a source by assumption in the orientation $\mathcal{K}^{\prime\prime}$ obtained by removing the part $P_{1}$ from $G$, \emph{i.e.}, in $G^{\prime\prime}:=G - P_{1}$. 
  So, without loss of generality we can assume that the sources in $G^{\prime\prime}$ are vertices in $P_2$. 
  Consider now the acyclic orientation $\mathcal{K}^{\prime\prime\prime}$ obtained from $G$ by removing all edges with both endpoints in $P_3\cup P_4\cup\ldots\cup P_{p+1}$. 
  Note that this is an acyclic orientation of a complete tripartite graph $K_{n_1,n_2,n_3+\ldots+n_p+n_{p+1}}$. 
  Thus, by Claim 2 there is a source in $\mathcal{K}^{\prime\prime\prime}$ that is a vertex in either $P_1$ or $P_2$. Therefore, this is a source of $\mathcal{K}$, too. 
  
  \medskip
  
  Finally, arguments above give that if there are more than one source (sink, respectively) then all sources (sinks, respectively) are vertices in the same part of the complete multipartite graph.
\end{proof}

Lemma \ref{l:ss} has the following direct consequences.

\begin{corollary}\label{c:ss}
 In every acyclic orientation of a non-empty graph there are both source and sink vertices.
\end{corollary}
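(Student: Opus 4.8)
The plan is to view any non-empty graph, together with a prescribed acyclic orientation, as a spanning subgraph of a complete multipartite graph that carries a compatible acyclic orientation, and then to invoke Lemma \ref{l:ss}. Isolated vertices are simultaneously sources and sinks, so if $G$ has no edges the statement is immediate; I may therefore assume $G$ has at least one edge, and in particular that $G$ has order $n\ge2$.

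Let $G$ be a graph on $n$ vertices equipped with an acyclic orientation. I would first observe that $G$ is a spanning subgraph of the complete graph $K_n$, which is itself the complete multipartite graph $K_{1,\ldots,1}$ with $n$ singleton parts, so that Lemma \ref{l:ss} applies to it. It then remains to extend the given acyclic orientation of $G$ to an acyclic orientation of $K_n$. An acyclic orientation of $G$ induces a partial order on $V(G)$; fixing any linear extension $v_1,\ldots,v_n$ of this order and orienting each edge $v_iv_j$ of $K_n$ from $v_i$ to $v_j$ exactly when $i<j$ produces an orientation $\mathcal K$ of $K_n$. This $\mathcal K$ is acyclic, since it is induced by a total order, and it agrees with the original orientation on every edge of $G$, since the chosen linear extension respects the orientation of $G$.

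By Lemma \ref{l:ss}, the orientation $\mathcal K$ of $K_n$ has a source $s$ and a sink $t$. Because $E(G)\subseteq E(K_n)$ and the two orientations coincide on $E(G)$, the in-degree of $s$ within $G$ is at most its in-degree within $K_n$, which is $0$; hence $s$ is a source of $G$, and symmetrically $t$ is a sink of $G$. Since $V(K_n)=V(G)$, both $s$ and $t$ are indeed vertices of $G$, which completes the argument.

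The only step needing genuine attention is the extension of the orientation, namely the claim that the given acyclic orientation of $G$ can be completed to an acyclic orientation of $K_n$ without altering the edges already present in $G$. This reduces to the standard fact that every acyclic orientation admits a linear extension; once that is granted, both the acyclicity of $\mathcal K$ and its agreement with $G$ on $E(G)$ are immediate. Passing to $K_n$ rather than to a smaller complete multipartite supergraph, built for instance from the color classes of a proper coloring, is what keeps this extension entirely elementary.
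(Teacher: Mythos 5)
Your proof is correct and follows essentially the same route as the paper: take a linear extension of the partial order induced by the acyclic orientation, use it to extend the orientation to an acyclic orientation of $K_n\simeq K_{1,\ldots,1}$, apply Lemma \ref{l:ss}, and observe that a source (sink) there remains a source (sink) in $G$. Your explicit handling of the edgeless case and of the agreement on $E(G)$ is a minor tidying of the same argument, not a different approach.
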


\begin{proof}
 Let $\mathcal{K}$ be an acyclic orientation of a given graph $G(V,E)$. It is well-known that an acyclic orientation induces a partial ordering $<$ on the vertices of $G$, defined by two vertices $v_1, v_2\in V$ which satisfy $v_1 < v_2$ if and only if there is a directed path from $v_1$ to $v_2$ along the edges of the orientation. 
 Let's consider a listing $\mathcal{L}$ of the vertices of $G$ that preserve the partial ordering above induced by the acyclic orientation. Clearly, such a listing provides to $V$ with a total order. Hence, we can complete an acyclic orientation $\mathcal{K}^\prime$ of $K_n\simeq K_{\underbrace{1,\ldots,1}_{n}}$ from $\mathcal{K}$ by adding new directed edges according to ordering in $\mathcal{L}$, \emph{i.e.}, adding $(v_1,v_2)$ to $\mathcal{K}^\prime$ if $v_1$ precedes $v_2$ in $\mathcal{L}$. Thus, by Lemma \ref{l:ss} there are both source and sink vertices in $\mathcal{K}^\prime$ that are also a source and sink, respectively in $\mathcal{K}$, after removing the same directed edges.
\end{proof}

\begin{corollary}\label{c:tree}
 Let $G$ be an undirected graph without isolated vertices and let $\mathcal{G}$ be an acyclic orientation of $G$. Then, $\mathcal{G}$ contains a directed spanning tree if and only if $\mathcal{G}$ has a unique source.
\end{corollary}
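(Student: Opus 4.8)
The plan is to prove both implications directly from the tree structure and from the acyclicity of $\mathcal{G}$, invoking Corollary \ref{c:ss} only to guarantee that at least one source exists. Throughout I read a \emph{directed spanning tree} as an out-tree (arborescence) with a root $r$: every vertex is reachable from $r$, the root has in-degree $0$ in the tree, and every other vertex has in-degree exactly $1$ in the tree.

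For the forward direction, suppose $\mathcal{G}$ contains a directed spanning tree $T$ with root $r$. Since $T$ is a spanning subgraph of $\mathcal{G}$, the in-degree of any vertex computed inside $T$ is at most its in-degree in $\mathcal{G}$. Hence any source of $\mathcal{G}$ (in-degree $0$) also has in-degree $0$ in $T$, and in an arborescence only the root enjoys this property; so every source must equal $r$. Because Corollary \ref{c:ss} guarantees at least one source, I conclude that the source is unique, namely $r$.

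For the converse, assume $\mathcal{G}$ has a unique source $s$. The key auxiliary fact I would establish is that every vertex of $G$ is reachable from some source along a directed path. To see this, start at an arbitrary vertex $v$: if $v$ is not a source it has an in-neighbour, and repeatedly following in-edges backwards produces a directed walk which, by the acyclicity of $\mathcal{G}$ together with the finiteness of $G$, cannot repeat a vertex and must therefore terminate at a source. Since the source is unique, every vertex is reachable from $s$. I would then extract the arborescence by a breadth-first (or depth-first) traversal from $s$ along the directed edges: the traversal reaches all vertices by the reachability just proved, and retaining one in-edge per newly discovered vertex yields a spanning out-tree rooted at $s$, i.e. a directed spanning tree.

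It is worth flagging that the hypothesis that $G$ has no isolated vertices is exactly what keeps the equivalence clean, since an isolated vertex is simultaneously a source and can never belong to a spanning tree, and would thereby break both sides. I expect the only genuine subtlety — the main obstacle — to be the backward-walk argument in the converse, where one must use acyclicity to rule out the walk entering a cycle, so that finiteness forces it to halt \emph{precisely} at a vertex of in-degree zero; the remaining steps are routine bookkeeping about in-degrees inside the subtree $T$.
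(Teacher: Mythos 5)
Your proof is correct, and its converse direction follows a genuinely different route from the paper's. For the forward implication both arguments amount to the same observation: in an arborescence only the root can have in-degree zero, so any source of $\mathcal{G}$ must be the root (the paper states this in one line; you add the appeal to Corollary \ref{c:ss} for existence, which you could even avoid by noting that an in-edge at the root $r$ would close a directed cycle with the tree path from $r$ to that edge's tail). For the converse, the paper argues by contradiction: it takes a maximal directed tree $\mathcal{T}$ rooted at the unique source, considers the induced orientation $\mathcal{R}$ on the uncovered vertices, and uses Corollary \ref{c:ss} (or the isolated-vertex case) to produce a source of $\mathcal{R}$ that is claimed to be a second source of $\mathcal{G}$; that claim silently relies on the maximality of $\mathcal{T}$ -- no edge can point from $V(\mathcal{T})$ to an uncovered vertex, else $\mathcal{T}$ could be extended -- a step the paper leaves to the reader. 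You instead prove directly that in a finite acyclic orientation the backward walk along in-edges from any vertex cannot revisit a vertex and must halt at an in-degree-zero vertex, so every vertex is reachable from some source; uniqueness then gives reachability of the whole graph from $s$, and a breadth-first or depth-first traversal keeping one in-edge per discovered vertex extracts the spanning out-tree. Your route is constructive, confines Corollary \ref{c:ss} to the easy direction, and avoids the maximal-tree bookkeeping; the paper's route is shorter on the page, but only because the maximality argument and the ``source of $\mathcal{R}$ is a source of $\mathcal{G}$'' justification are left implicit.
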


\begin{proof}
 Note that if there is a spanning tree in $\mathcal{G}$ then the root of the spanning tree is the only source.
 Assume now that there is exactly one source in $\mathcal{G}$ and there does not exist a directed spanning tree in $\mathcal G$. 
 Denote $v$ the source in $\mathcal G$. 
 Consider the largest directed tree in $\mathcal G$ with root $v$, denoted $\mathcal T$. Since $\mathcal T$ is not a spanning tree, there is a non-empty collection of vertices in $G$ not included in $\mathcal T$, \emph{i.e.}, $V(G)\setminus V(\mathcal T)\neq\emptyset$. 
 Thus, since $G$ has no isolated vertices, every isolated vertex in $\mathcal R:=\langle V(G)\setminus V(\mathcal T)\rangle$ is a source in $\mathcal G$, but this is a contradiction. 
 If $\mathcal R$ has no isolated vertices, by Corollary \ref{c:ss}, $\mathcal R$ has a source $w\neq v$ that is also a source of $\mathcal G$. This is the contradiction we were looking for, therefore this completes the proof.
\end{proof}

For every collection of  natural numbers $n_1,n_2,\ldots,n_p$, with $p\ge2$, we define the number of acyclic orientations of a complete multipartite graph with fixed parts with respective sizes $n_1,n_2,\ldots,n_p$ and unlabelled vertices, by $\mathcal{A}(n_1,n_2,\ldots,n_p)$.
We define $\mathcal{A}(n_1,n_2,\ldots,n_p)$ such that one or more parts of the multipartite graph may be zero. This notation will be convenient. 
For instance, $\mathcal{A}(0,1,2)$ actually represents the number of acyclic orientations of a complete bipartite graph $K_{1,2}$ since the first part of the tripartite graph is empty. 
In the same way, $\mathcal{A}(1,0,2)$, $\mathcal{A}(0,1,0,0,2)$ and many others also represent $\mathcal{A}(1,2)$. 
Note that $\mathcal{A}(0,0,n)$ represents the number of acyclic orientations of a tripartite graph with at most one non-null (third) part with $n$ vertices, \emph{i.e.}, the empty graph $E_n$, and consequently, $\mathcal{A}(0,0,n)=1$ for all natural numbers $n$. 
For obvious reasons $\mathcal{A}(n_1,n_2,\ldots,n_p)$ is symmetric with respect to the permutation of their entries.

\begin{proposition}\label{p:recursion}
  $\mathcal{A}(n_1,n_2,\ldots,n_p)$ satisfies
  \begin{equation}\label{eq:recursion}
      \mathcal{A}(n_1,n_2,\ldots,n_p) = \mathcal{A}(n_1-1,n_2,\ldots,n_p)+\mathcal{A}(n_1,n_2-1,\ldots,n_p)+\ldots+\mathcal{A}(n_1,n_2,\ldots,n_p-1).
  \end{equation}
  with initial values $\mathcal{A}(n_1,n_2,\ldots,n_p)=0$ if $n_i=-1$ for some $1\le i\le p$, and $\mathcal{A}(0,\ldots,0)=1$.
\end{proposition}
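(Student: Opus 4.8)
The plan is to prove the identity bijectively, by matching the acyclic orientations counted by $\mathcal{A}(n_1,\ldots,n_p)$ with the disjoint union of those counted by the $p$ terms on the right-hand side. I would work with sinks throughout (sources would serve equally well, by edge reversal), relying on Lemma~\ref{l:ss}: every acyclic orientation of a complete multipartite graph has a sink, and all of its sinks belong to a single part. Given an acyclic orientation $\mathcal{K}$ of $K_{n_1,\ldots,n_p}$, let $i$ be the index of the part $V_i$ that contains all the sinks, delete one sink $v$ together with its incident edges, and record the pair $(i,\mathcal{K}-v)$. Deleting a vertex cannot create a directed cycle, so $\mathcal{K}-v$ is an acyclic orientation of $K_{n_1,\ldots,n_i-1,\ldots,n_p}$, and the pair lands in the $i$-th summand on the right.

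The step requiring care is that this assignment is well defined on \emph{unlabelled} orientations, i.e. on classes under permuting vertices inside each part. Here I would use that a sink $v\in V_i$ is adjacent to every vertex of $V\setminus V_i$ with all those edges directed into $v$; hence any two sinks in $V_i$ have exactly the same oriented neighbourhood, the transposition exchanging them is an automorphism, and deleting either one produces isomorphic orientations. Thus both the chosen part $i$ and the isomorphism class of $\mathcal{K}-v$ depend only on the class of $\mathcal{K}$. I expect this point --- and not acyclicity, which is immediate --- to be the main obstacle.

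For the inverse, given a pair $(i,\mathcal{K}')$ I would adjoin a fresh vertex $v$ to part $V_i$ and orient every edge joining $v$ to $V\setminus V_i$ into $v$, so that $v$ becomes a sink. No out-edge is created at $v$, so acyclicity persists, and I must check that the sink-part of the enlarged orientation is again $i$: any sink $w$ of $\mathcal{K}'$ lying outside $V_i$ acquires the edge $w\to v$ and so ceases to be a sink, which forces all remaining sinks into $V_i$. Verifying that the two constructions are mutually inverse up to isomorphism is then routine --- deleting the freshly added full sink recovers $\mathcal{K}'$, while the automorphism argument of the previous paragraph shows that deleting any other sink gives an isomorphic result --- and this establishes \eqref{eq:recursion}.

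It remains to record the boundary values. The empty graph admits exactly one (vacuously acyclic) orientation, giving $\mathcal{A}(0,\ldots,0)=1$. The convention $\mathcal{A}(n_1,\ldots,n_p)=0$ whenever some $n_i=-1$ is the value forced by the bijection: a part with no vertices can never contribute the deleted sink, so the corresponding summand must vanish. Degenerate instances in which all but one part are empty reduce to the empty graph and remain consistent with the recursion, exactly as illustrated by $\mathcal{A}(0,0,n)=1$ in the text.
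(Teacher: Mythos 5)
Your proposal is correct and takes essentially the same approach as the paper: both arguments peel off an extremal vertex whose part is pinned down by Lemma~\ref{l:ss} (the paper deletes a source where you delete a sink, which is equivalent by edge reversal), so that the resulting smaller orientation lands in the corresponding summand of \eqref{eq:recursion}. Your write-up is in fact more careful than the paper's, which asserts the recursion after the deletion step without spelling out well-definedness on unlabelled orientations or the inverse (re-attachment) construction, but the underlying decomposition is identical.
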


\begin{proof}
 Consider a complete multipartite graph $K_{n_1,\ldots,n_p}$ and let $\mathcal{K}$ be one of its acyclic orientations. By Lemma \ref{l:ss} there is at least one source in $\mathcal{K}$ and all its sources are vertices in the same part. Let $v$ be one of the sources in $\mathcal{K}$. Without loss of generality we can assume that $v$ is a vertex in Part 1, $n_1\ge1$. 
 Note that the acyclic orientation $\mathcal{K}^\prime$ obtained by removing $v$ and the edges started at $v$ from $\mathcal{K}$ we obtain an acyclic orientation of $K_{n_1-1,n_2,\ldots,n_p}$. Since the arbitrary choice of the source may run over all parts, one at the time, and the parts of $K_{n_1,\ldots,n_p}$ are fixed, we obtain
 \[
 \mathcal{A}(n_1,n_2,\ldots,n_p) = \mathcal{A}(n_1-1,n_2,\ldots,n_p)+\mathcal{A}(n_1,n_2-1,\ldots,n_p)+\ldots+\mathcal{A}(n_1,n_2,\ldots,n_p-1).
 \]
 the initial values of the recursive formula \eqref{eq:recursion} follow from the natural meaning of $\mathcal{A}(0,\ldots,0)$, \emph{i.e.}, the number of acyclic orientations in a multipartite graph with empty parts of vertices $K_{0,\ldots,0}$. Clearly, from a part with no vertices there are no source to remove from, so  $\mathcal{A}(n_1,n_2,\ldots,n_p)=0$ if $n_i=-1$ for some $1\le i\le p$.
\end{proof}

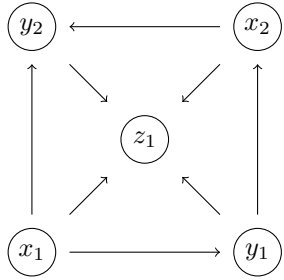
\begin{figure}[ht]
\centering   
\begin{tikzpicture}
\draw (0,0) circle (9pt);
\draw (3,3) circle (9pt);
\draw (3,0) circle (9pt);
\draw (0,3) circle (9pt);
\draw (1.5,1.5) circle (9pt);
\node at (0,0)  {$x_{1}$};
\node at (3,3)  {$x_{2}$};
\node at (3,0) {$y_{1}$};
\node at (0,3) {$y_{2}$};
\node at (1.5,1.5) {$z_{1}$};
\path[->] (0,.5) edge node {} (0,2.5);
\path[->] (.5,.5) edge node {} (1,1);
\path[->] (.5,0) edge node {} (2.5,0);
\path[->] (2.5,.5) edge node {} (2,1);
\path[->] (3,.5) edge node {} (3,2.5);
\path[->] (2.5,2.5) edge node {} (2,2);
\path[->] (.5,2.5) edge node {} (1,2);
\path[->] (2.5,3) edge node {} (.5,3);
\end{tikzpicture}

\caption{An acyclic orientation of $K_{2,2,1}$ (5-wheel graph) with consecutive sources: $x_1, y_1, x_2, y_2$ respectively} 
\label{fig:consecutive}
\end{figure}

Figure \ref{fig:consecutive} shows the sequence of sources in an acyclic orientation of the $5$-wheel graph, a complete tripartite graph $K_{2,2,1}$, providing a general view for an acyclic orientation of a multipartite graph. 
Proposition \ref{p:recursion} gives a direct procedure to generate all acyclic orientations of a multipartite graph, as well as encoding each orientation in an easy way. Define $[n]:=\{0,1,2,\ldots,n-1\}$ for every natural number $n$. 

\begin{theorem}\label{th:coding}
  There is a one-to-one correspondence from the set of all acyclic orientations of a complete multipartite graph with at most $p$ fixed parts and the $n^{th}$-vectors of $[p]^N$, where $N$ is the order of $G$. This correspondence assigns to each acyclic orientation with an $N$-vector code.
\end{theorem}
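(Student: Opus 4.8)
The plan is to realize the correspondence as the iterated ``peel off a source'' procedure underlying Proposition \ref{p:recursion}, and to prove it is one-to-one by exhibiting an explicit inverse. First I would define the encoding map $\Phi$ on an acyclic orientation $\mathcal{K}$ of $G=K_{n_1,\ldots,n_p}$ (allowing empty parts, with $N=n_1+\cdots+n_p$): by Lemma \ref{l:ss} the orientation has at least one source and all sources lie in a single part; record the symbol $c_1\in[p]$ labelling that part, delete one such source $v$ together with its (outgoing) edges, and observe that $\mathcal{K}-v$ is again an acyclic orientation of $K_{n_1,\ldots,n_{c_1}-1,\ldots,n_p}$. Iterating $N$ times empties the graph and produces a word $\Phi(\mathcal{K})=(c_1,\ldots,c_N)\in[p]^N$, which I identify with an element of $[p^N]$ through its base-$p$ value. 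This is exactly the sequence of source parts recorded in the recursion \eqref{eq:recursion}.

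The first thing to check is that $\Phi$ is well defined on \emph{unlabelled-vertex} orientations, i.e. independent of which source is deleted when a part holds several; this is the crux. The key observation is that any two sources $v,v'$ in the same part are \emph{twins} in $\mathcal{K}$: each is adjacent to precisely the vertices outside their common part, and since both have in-degree zero every such edge is oriented outward, so the transposition $v\leftrightarrow v'$ (fixing all other vertices) is a parts-preserving automorphism of $\mathcal{K}$. Consequently $\mathcal{K}-v$ and $\mathcal{K}-v'$ coincide as orientations with labelled parts and unlabelled vertices, whence the recorded symbol $c_1$ and the residual orientation are independent of the choice. By induction the whole word $\Phi(\mathcal{K})$ is well defined.

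To prove injectivity I would build the decoding map and check it undoes $\Phi$. Given a word $(c_1,\ldots,c_N)$, read it from the last symbol to the first and reconstruct an orientation by successively \emph{adding} a vertex: at the step corresponding to $c_t$, introduce a new vertex $v$ into part $c_t$ and orient outward from $v$ every edge joining $v$ to the vertices already present in the other parts. Since $v$ is forced to be a source joined to all current vertices outside part $c_t$, this step is deterministic on unlabelled orientations. Letting $\mathcal{K}_t$ be the orientation left after $t$ deletions of $\Phi$, the verification is that adding $v$ back to $\mathcal{K}_t$ returns exactly $\mathcal{K}_{t-1}$: in $\mathcal{K}_{t-1}$ the deleted source $v$ is adjacent to all and only the non-part-$c_t$ vertices surviving in $\mathcal{K}_t$, and all those edges leave $v$, which is precisely what the decoding reinstates. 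A downward induction on $t$ then shows that decoding applied to $\Phi(\mathcal{K})$ returns $\mathcal{K}$, so distinct orientations receive distinct codes and $\Phi$ is one-to-one into $[p]^N\cong[p^N]$.

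The main obstacle I anticipate is the well-definedness argument of the second paragraph; once the twin observation is in place the rest is bookkeeping, and the consistency of this correspondence with $\mathcal{A}(n_1,\ldots,n_p)$ as the count of admissible codes follows by matching each deletion step with one summand of \eqref{eq:recursion}.
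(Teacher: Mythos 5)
Your construction is the same as the paper's: iteratively record the part containing the sources (legitimate by Lemma \ref{l:ss}) and delete one source; this is precisely the code function $C$ of \eqref{eq:code_function}. The difference is in what gets verified. The paper's proof only defines the functions $s$, $f$ and $C$ and asserts that $C$ is well defined and produces strings of length $N$; it gives no argument for well-definedness under the choice of which source to delete, and none for injectivity. You supply both: the \emph{twin} observation (two sources in the same part have the same out-neighbourhood, namely everything outside their part, so transposing them is a parts-preserving automorphism) makes the code independent of the deleted source, and your right-to-left decoding map, together with the check that re-inserting the deleted source recovers $\mathcal{K}_{t-1}$, gives $\mathrm{decode}\circ\Phi=\mathrm{id}$ and hence injectivity. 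This is a more complete argument than the one printed in the paper.

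One loose end remains if you want the full one-to-one \emph{correspondence} with $[p^N]$ that the theorem asserts: injectivity alone does not show that every word of $[p]^N$ occurs as a code, and you cannot get surjectivity by a cardinality count, because the identity $\sum\binom{N}{n_0,\ldots,n_{p-1}}=p^N$ (summing over all part-size vectors) is exactly what Theorem \ref{th:fixed_parts} later extracts \emph{from} this theorem. Your decoding map closes the gap, but you must also prove $\Phi\circ\mathrm{decode}=\mathrm{id}$, which you never state: decoding an arbitrary word $(c_1,\ldots,c_N)$ produces an acyclic orientation (each inserted vertex has in-degree zero at insertion time, so no directed cycle is ever created); its last-inserted vertex is a source lying in part $c_1$, so by Lemma \ref{l:ss} \emph{all} sources lie in part $c_1$; and deleting one of them (any one, by your twin argument) returns the orientation decoded from $(c_2,\ldots,c_N)$, so induction on $N$ finishes it. Adding this remark turns your argument into a genuine bijection proof, strictly stronger than the paper's.
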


\begin{proof}
  Consider a complete multipartite graph $G=K_{n_0,n_1,\ldots,n_{p-1}}$ with $p\ge2$ parts labelled $0,1,2,\ldots,p-1$ where part $i$ has exactly $n_i$ unlabelled vertices for $i\in[p]$. 
  Let $\mathcal{L}$ be the set of all non-null acyclic orientations of complete multipartite graphs that are subgraphs of $G$, $\overline{\mathcal{L}}:=\mathcal{L}\cup \{K_0\}$. Note that,  $N=n_0+n_1+\ldots+n_{p-1}$. Conveniently consider that all vertices in an empty graph is both a source and a sink.
  Define the function $s:\mathcal{L}\to[p]$ as $s(\mathcal K)=i$ if the sources of $\mathcal K$ are vertices in the part $i$. 
  Define the function $f:\mathcal{L}\to\overline{\mathcal{L}}$ as $f(\mathcal K)$ is the acyclic orientation $\mathcal{K}^\prime$ obtained from $\mathcal K$ by removing one of the sources of $\mathcal K$. 
  Let's consider $\mathcal{K}$ a non-null acyclic orientation of $G$. 
  Clearly, the functions $s$ and $f$ are well defined.
  
  Now, we encode each acyclic orientation of $G$ as follows. 
  Thus, we recursively define the code function $C:\overline{\mathcal L}\to[p]^*$ for $\mathcal K$ as 
  \begin{equation}\label{eq:code_function}
   C(\mathcal K) = \left\{
    \begin{aligned}
     s(\mathcal K).C\left(f(\mathcal K)\right), & \text{ if } \mathcal{K} \text{ is non-null} \\
     \lambda, & \text{ if } \mathcal{K} \text{ is null} 
    \end{aligned}
   \right.
  \end{equation}
  where $\lambda$ is the empty string, $.$ denotes the concatenation of strings, and $[p]^*$ denotes the set of all strings in the alphabet $\{0,1,2,\ldots,p-1\}$. Note that the code function $C$ is well defined. Specifically, the code for an acyclic orientation of $G$ will be a string in $[p]^*$ with length $N$ since it concatenates one (source) vertex at the time until the null graph appears. 
\end{proof}

Similarly, we can define a code function like above using the sinks instead of the sources. Indeed, the code for an acyclic orientation $\mathcal K$ using the decomposition into a sequence of sinks is exactly the reverse code of $C(\mathcal K)$.

\begin{figure}[ht]
\setlength{\tabcolsep}{1mm} 
\def\arraystretch{1.0} 
\centering

\begin{tikzpicture}[thick,
  every node/.style={draw,circle},
  fsnode/.style={fill=myblue},
  ssnode/.style={fill=mygreen},
  every fit/.style={ellipse,draw,inner sep=-2pt,text width=2cm},
  ->,shorten >= 3pt,shorten <= 3pt
]
\begin{scope}[start chain=going below,node distance=7mm]
\foreach \i in {1,2}
  \node[fsnode,on chain] (f\i) [label=left: \i] {};
\end{scope}

\begin{scope}[xshift=4cm,yshift=0.5cm,start chain=going below,node distance=7mm]
\foreach \i in {3,4,5}
  \node[ssnode,on chain] (s\i) [label=right: \i] {};
\end{scope}

\node [myblue,fit=(f1) (f2),label=above:$n_0$] {};
\node [mygreen,fit=(s3) (s5),label=above:$n_1$] {};

\draw (s3) -- (f1);
\draw (f1) -- (s4);
\draw (f1) -- (s5);
\draw (f2) -- (s3);
\draw (f2) -- (s4);
\draw (f2) -- (s5);
\end{tikzpicture}

\caption{An acyclic orientation of $K_{2,3}$ with sequences of sources $(2,3,1,4,5)$ or $(2,3,1,5,4)$, and code $0.1.0.1.1$}
\label{fig:k23}
\end{figure}
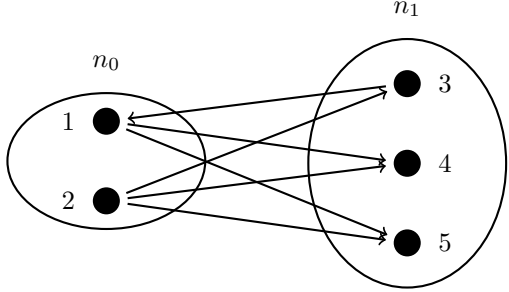

Figure \ref{fig:k23} shows the encoding of an acyclic orientation of $K_{2,3}$. Table \ref{tab:k22} shows all acyclic orientations of $K_{2,2}$ with its corresponding codes.
Encoding given in Theorem \ref{th:coding} allows us to obtain a closed formula for $\mathcal{A}(n_1,n_2,\ldots,n_p)$, see Theorem \ref{th:fixed_parts}.

\begin{table}[ht]
\setlength{\tabcolsep}{1mm} 
\def\arraystretch{1.0} 
\centering
\begin{tabular}{|c|c|c|c|}
\hline
\begin{tikzpicture}
 
\draw[fill=black] (0,0) circle (1.5pt);
\draw[fill=black] (1,0) circle (1.5pt);
\draw[fill=black] (0,1) circle (1.5pt);
\draw[fill=black] (1,1) circle (1.5pt);
\node at (-.25,.25)  {$x_{1}$};
\node at (1.25,.25)  {$y_{1}$};
\node at (-.25,1.25) {$x_{2}$};
\node at (1.25,1.25) {$y_{2}$};
\path[->] (0,0) edge node {} (.9,.9);
\path[->] (1,1) edge node {} (.1,1);
\path[->] (0,0) edge node {} (.9,0);
\path[->] (0,1) edge node {} (.9,.1);
\end{tikzpicture}
&\begin{tikzpicture}
\draw[fill=black] (0,0) circle (1.5pt);
\draw[fill=black] (1,0) circle (1.5pt);
\draw[fill=black] (0,1) circle (1.5pt);
\draw[fill=black] (1,1) circle (1.5pt);
\node at (-.25,.25)  {$x_{1}$};
\node at (1.25,.25)  {$y_{1}$};
\node at (-.25,1.25) {$x_{2}$};
\node at (1.25,1.25) {$y_{2}$};
\path[->] (0,0) edge node {} (.9,.9);
\path[->] (0,1) edge node {} (.9,1);
\path[->] (1,0) edge node {} (.1,0);
\path[->] (0,1) edge node {} (.9,.1);
\end{tikzpicture} 
&\begin{tikzpicture}
\draw[fill=black] (0,0) circle (1.5pt);
\draw[fill=black] (1,0) circle (1.5pt);
\draw[fill=black] (0,1) circle (1.5pt);
\draw[fill=black] (1,1) circle (1.5pt);
\node at (-.25,.25)  {$x_{1}$};
\node at (1.25,.25)  {$y_{1}$};
\node at (-.25,1.25) {$x_{2}$};
\node at (1.25,1.25) {$y_{2}$};
\path[->] (1,1) edge node {} (.1,.1);
\path[->] (0,1) edge node {} (.9,1);
\path[->] (0,0) edge node {} (.9,0);
\path[->] (0,1) edge node {} (.9,.1);
\end{tikzpicture} 
&\begin{tikzpicture}
\draw[fill=black] (0,0) circle (1.5pt);
\draw[fill=black] (1,0) circle (1.5pt);
\draw[fill=black] (0,1) circle (1.5pt);
\draw[fill=black] (1,1) circle (1.5pt);
\node at (-.25,.25)  {$x_{1}$};
\node at (1.25,.25)  {$y_{1}$};
\node at (-.25,1.25) {$x_{2}$};
\node at (1.25,1.25) {$y_{2}$};
\path[->] (0,0) edge node {} (.9,.9);
\path[->] (1,0) edge node {} (.1,.9);
\path[->] (0,0) edge node {} (.9,0);
\path[->] (0,1) edge node {} (.9,1);
\end{tikzpicture} 
\\ \( 0.1.0.1\) & \( 0.1.0.1\) & \( 0.1.0.1\) & \( 0.1.0.1\)
\\ \hline
\begin{tikzpicture}
\draw[fill=black] (0,0) circle (1.5pt);
\draw[fill=black] (1,0) circle (1.5pt);
\draw[fill=black] (0,1) circle (1.5pt);
\draw[fill=black] (1,1) circle (1.5pt);
\node at (-.25,.25)  {$x_{1}$};
\node at (1.25,.25)  {$y_{1}$};
\node at (-.25,1.25) {$x_{2}$};
\node at (1.25,1.25) {$y_{2}$};
\path[->] (0,1) edge node {} (.9,1);
\path[->] (1,1) edge node {} (.1,.1);
\path[->] (1,0) edge node {} (.1,.9);
\path[->] (1,0) edge node {} (.1,0);
\end{tikzpicture}
&\begin{tikzpicture}
\draw[fill=black] (0,0) circle (1.5pt);
\draw[fill=black] (1,0) circle (1.5pt);
\draw[fill=black] (0,1) circle (1.5pt);
\draw[fill=black] (1,1) circle (1.5pt);
\node at (-.25,.25)  {$x_{1}$};
\node at (1.25,.25)  {$y_{1}$};
\node at (-.25,1.25) {$x_{2}$};
\node at (1.25,1.25) {$y_{2}$};
\path[->] (1,1) edge node {} (.1,1);
\path[->] (1,1) edge node {} (.1,.1);
\path[->] (1,0) edge node {} (.1,.9);
\path[->] (0,0) edge node {} (.9,0);
\end{tikzpicture} 
&\begin{tikzpicture}
\draw[fill=black] (0,0) circle (1.5pt);
\draw[fill=black] (1,0) circle (1.5pt);
\draw[fill=black] (0,1) circle (1.5pt);
\draw[fill=black] (1,1) circle (1.5pt);
\node at (-.25,.25)  {$x_{1}$};
\node at (1.25,.25)  {$y_{1}$};
\node at (-.25,1.25) {$x_{2}$};
\node at (1.25,1.25) {$y_{2}$};
\path[->] (1,1) edge node {} (.1,1);
\path[->] (0,0) edge node {} (.9,.9);
\path[->] (1,0) edge node {} (.1,.9);
\path[->] (1,0) edge node {} (.1,0);
\end{tikzpicture} 
&\begin{tikzpicture}
\draw[fill=black] (0,0) circle (1.5pt);
\draw[fill=black] (1,0) circle (1.5pt);
\draw[fill=black] (0,1) circle (1.5pt);
\draw[fill=black] (1,1) circle (1.5pt);
\node at (-.25,.25)  {$x_{1}$};
\node at (1.25,.25)  {$y_{1}$};
\node at (-.25,1.25) {$x_{2}$};
\node at (1.25,1.25) {$y_{2}$};
\path[->] (1,1) edge node {} (.1,1);
\path[->] (1,1) edge node {} (.1,.1);
\path[->] (0,1) edge node {} (.9,.1);
\path[->] (1,0) edge node {} (.1,0);
\end{tikzpicture} 
\\ \( 1.0.1.0\) & \( 1.0.1.0\) & \( 1.0.1.0\) & \( 1.0.1.0\)
\\ \hline
\begin{tikzpicture}
\draw[fill=black] (0,0) circle (1.5pt);
\draw[fill=black] (1,0) circle (1.5pt);
\draw[fill=black] (0,1) circle (1.5pt);
\draw[fill=black] (1,1) circle (1.5pt);
\node at (-.25,.25)  {$x_{1}$};
\node at (1.25,.25)  {$y_{1}$};
\node at (-.25,1.25) {$x_{2}$};
\node at (1.25,1.25) {$y_{2}$};
\path[->] (0,1) edge node {} (.9,1);
\path[->] (1,1) edge node {} (.1,.1);
\path[->] (0,1) edge node {} (.9,.1);
\path[->] (1,0) edge node {} (.1,0);
\end{tikzpicture}
&\begin{tikzpicture}
\draw[fill=black] (0,0) circle (1.5pt);
\draw[fill=black] (1,0) circle (1.5pt);
\draw[fill=black] (0,1) circle (1.5pt);
\draw[fill=black] (1,1) circle (1.5pt);
\node at (-.25,.25)  {$x_{1}$};
\node at (1.25,.25)  {$y_{1}$};
\node at (-.25,1.25) {$x_{2}$};
\node at (1.25,1.25) {$y_{2}$};
\path[->] (1,1) edge node {} (.1,1);
\path[->] (0,0) edge node {} (.9,.9);
\path[->] (1,0) edge node {} (.1,.9);
\path[->] (0,0) edge node {} (.9,0);
\end{tikzpicture} 
&\begin{tikzpicture}
\draw[fill=black] (0,0) circle (1.5pt);
\draw[fill=black] (1,0) circle (1.5pt);
\draw[fill=black] (0,1) circle (1.5pt);
\draw[fill=black] (1,1) circle (1.5pt);
\node at (-.25,.25)  {$x_{1}$};
\node at (1.25,.25)  {$y_{1}$};
\node at (-.25,1.25) {$x_{2}$};
\node at (1.25,1.25) {$y_{2}$};
\path[->] (1,1) edge node {} (.1,1);
\path[->] (1,1) edge node {} (.1,.1);
\path[->] (0,1) edge node {} (.9,.1);
\path[->] (0,0) edge node {} (.9,0);
\end{tikzpicture} 
&\begin{tikzpicture}
\draw[fill=black] (0,0) circle (1.5pt);
\draw[fill=black] (1,0) circle (1.5pt);
\draw[fill=black] (0,1) circle (1.5pt);
\draw[fill=black] (1,1) circle (1.5pt);
\node at (-.25,.25)  {$x_{1}$};
\node at (1.25,.25)  {$y_{1}$};
\node at (-.25,1.25) {$x_{2}$};
\node at (1.25,1.25) {$y_{2}$};
\path[->] (0,1) edge node {} (.9,1);
\path[->] (0,0) edge node {} (.9,.9);
\path[->] (1,0) edge node {} (.1,.9);
\path[->] (1,0) edge node {} (.1,0);
\end{tikzpicture} 
\\ \( 0.1.1.0\) & \( 0.1.1.0\) & \( 1.0.0.1\) & \( 1.0.0.1\)
\\ \hline
\begin{tikzpicture}
\draw[fill=black] (0,0) circle (1.5pt);
\draw[fill=black] (1,0) circle (1.5pt);
\draw[fill=black] (0,1) circle (1.5pt);
\draw[fill=black] (1,1) circle (1.5pt);
\node at (-.25,.25)  {$x_{1}$};
\node at (1.25,.25)  {$y_{1}$};
\node at (-.25,1.25) {$x_{2}$};
\node at (1.25,1.25) {$y_{2}$};
\path[->] (0,0) edge node {} (.9,.9);
\path[->] (0,1) edge node {} (.9,1);
\path[->] (0,0) edge node {} (.9,0);
\path[->] (0,1) edge node {} (.9,.1);
\end{tikzpicture}
&\begin{tikzpicture}
\draw[fill=black] (0,0) circle (1.5pt);
\draw[fill=black] (1,0) circle (1.5pt);
\draw[fill=black] (0,1) circle (1.5pt);
\draw[fill=black] (1,1) circle (1.5pt);
\node at (-.25,.25)  {$x_{1}$};
\node at (1.25,.25)  {$y_{1}$};
\node at (-.25,1.25) {$x_{2}$};
\node at (1.25,1.25) {$y_{2}$};
\path[->] (1,1) edge node {} (.1,1);
\path[->] (1,1) edge node {} (.1,.1);
\path[->] (1,0) edge node {} (.1,.9);
\path[->] (1,0) edge node {} (.1,0);
\end{tikzpicture} 
&\begin{tikzpicture}
\draw[fill=black] (0,0) circle (1.5pt);
\draw[fill=black] (1,0) circle (1.5pt);
\draw[fill=black] (0,1) circle (1.5pt);
\draw[fill=black] (1,1) circle (1.5pt);
\node at (-.25,.25)  {$x_{1}$};
\node at (1.25,.25)  {$y_{1}$};
\node at (-.25,1.25) {$x_{2}$};
\node at (1.25,1.25) {$y_{2}$};
\path[->] (0,1) edge node {} (.9,1);
\path[->] (1,1) edge node {} (.1,.1);
\path[->] (1,0) edge node {} (.1,.9);
\path[->] (0,0) edge node {} (.9,0);
\end{tikzpicture} 
&\begin{tikzpicture}
\draw[fill=black] (0,0) circle (1.5pt);
\draw[fill=black] (1,0) circle (1.5pt);
\draw[fill=black] (0,1) circle (1.5pt);
\draw[fill=black] (1,1) circle (1.5pt);
\node at (-.25,.25)  {$x_{1}$};
\node at (1.25,.25)  {$y_{1}$};
\node at (-.25,1.25) {$x_{2}$};
\node at (1.25,1.25) {$y_{2}$};
\path[->] (0,0) edge node {} (.9,.9);
\path[->] (1,1) edge node {} (.1,1);
\path[->] (1,0) edge node {} (.1,0);
\path[->] (0,1) edge node {} (.9,.1);
\end{tikzpicture} 
\\ \( 0.0.1.1\) & \( 1.1.0.0\) & \( \lambda \) & \( \lambda \)
\\ \hline
\end{tabular}
\caption{All orientations of $K_{2,2}$ with labeled vertices and their corresponding codes}
\label{tab:k22}
\end{table}

\begin{theorem}\label{th:fixed_parts}
  \begin{equation}\label{eq:fixed_parts}
  \mathcal{A}(n_1,n_2,\ldots,n_p)={n_1+n_2+\ldots+n_p \choose n_1,n_2,\ldots,n_p}:=\frac{(n_1+n_2+\ldots+n_p)!}{n_1!n_2!\ldots n_p!}.
  \end{equation}
\end{theorem}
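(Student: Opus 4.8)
The plan is to prove \eqref{eq:fixed_parts} by induction on $N:=n_1+n_2+\ldots+n_p$, showing that the multinomial coefficient on the right-hand side obeys exactly the recursion and the initial conditions established for $\mathcal{A}$ in Proposition \ref{p:recursion}. Since a multi-indexed array is determined uniquely by that recurrence together with its boundary data, matching both forces the two quantities to coincide.

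First I would check the base case $N=0$: here every $n_i=0$, and both $\mathcal{A}(0,\ldots,0)=1$ and ${0\choose 0,\ldots,0}=1$ agree. For the inductive step, assume the identity holds for all tuples whose entries sum to $N-1$. The core computation is the generalized Pascal (addition) identity for multinomial coefficients: writing the right-hand side of \eqref{eq:recursion} with multinomials and factoring out the common quantity $\frac{(N-1)!}{n_1!\cdots n_p!}$, I would obtain
\begin{equation*}
\sum_{i=1}^{p}{N-1\choose n_1,\ldots,n_i-1,\ldots,n_p}=\frac{(N-1)!}{n_1!\cdots n_p!}\sum_{i=1}^{p}n_i=\frac{(N-1)!}{n_1!\cdots n_p!}\cdot N={N\choose n_1,\ldots,n_p}.
\end{equation*}
By the inductive hypothesis each summand ${N-1\choose n_1,\ldots,n_i-1,\ldots,n_p}$ equals $\mathcal{A}(n_1,\ldots,n_i-1,\ldots,n_p)$, so the displayed chain shows that $\sum_{i=1}^{p}\mathcal{A}(n_1,\ldots,n_i-1,\ldots,n_p)={N\choose n_1,\ldots,n_p}$; combined with \eqref{eq:recursion} this yields $\mathcal{A}(n_1,\ldots,n_p)={N\choose n_1,\ldots,n_p}$, completing the induction.

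The only point requiring care — and the closest thing to an obstacle — is the bookkeeping for degenerate entries, so that the two conventions agree term by term. When some $n_i=0$, the corresponding summand on the right of \eqref{eq:recursion} is $\mathcal{A}(n_1,\ldots,-1,\ldots,n_p)=0$ by the initial values of Proposition \ref{p:recursion}; I would match this by declaring ${N-1\choose n_1,\ldots,-1,\ldots,n_p}:=0$ whenever an argument is negative, consistent with the factorial expression. Such indices then simply drop out, contributing $0$ to both sides, and the equality $\sum_{i=1}^{p}n_i=N$ used in the computation above remains valid.

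Finally, as a conceptual check, I would note that this is exactly what the encoding of Theorem \ref{th:coding} predicts: the code of an acyclic orientation of $K_{n_0,\ldots,n_{p-1}}$ is a length-$N$ string over $[p]$ in which symbol $i$ occurs precisely $n_i$ times, since one symbol is emitted each time a source is deleted from part $i$ and part $i$ is exhausted after $n_i$ deletions. The number of such strings is ${N\choose n_1,\ldots,n_p}$, so the recursion count and the word count agree, reinforcing the formula.
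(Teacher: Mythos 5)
Your proof is correct, but it takes the alternative route that the paper only mentions in passing rather than the one it actually writes out. The paper's own proof is a one-line bijective count: by Theorem \ref{th:coding}, the acyclic orientations of $K_{n_1,\ldots,n_p}$ with fixed parts and unlabelled vertices correspond to strings of length $N=n_1+\cdots+n_p$ over the alphabet $[p]$ containing exactly $n_i$ copies of the digit $i-1$, and the number of such strings is the multinomial coefficient. Your argument instead verifies that the multinomial coefficient satisfies the recursion \eqref{eq:recursion} of Proposition \ref{p:recursion} (via the generalized Pascal identity $\sum_i {N-1\choose n_1,\ldots,n_i-1,\ldots,n_p}={N\choose n_1,\ldots,n_p}$) together with the same boundary data, and concludes by uniqueness of the solution of that recurrence; this is exactly the option the paper flags in the sentence immediately following the theorem (``We can also obtain the result above by proving that \eqref{eq:fixed_parts} satisfies the recurrence given in Proposition \ref{p:recursion}'') but never carries out. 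The trade-off: the paper's bijective proof is shorter and makes the combinatorial meaning of the formula transparent, but it leans on the correspondence of Theorem \ref{th:coding} being exactly onto the set of strings with prescribed digit multiplicities; your inductive proof needs only Proposition \ref{p:recursion}, treats the degenerate indices (entries equal to $0$, hence summands with a $-1$) explicitly and carefully, and is self-contained --- indeed, your closing ``conceptual check'' is, in substance, the paper's entire proof.
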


\begin{proof}
 The result is a direct consequence of counting the number of codes as in Theorem \ref{th:coding}, \emph{i.e.}, numbers in the numerical system of base $p$ with length $n_1+n_2+\ldots+n_p$ and with exactly $n_1$ digits $0$, $n_2$ digits $1$, \ldots, $n_p$ digits $p-1$.
\end{proof}

We can also obtain the result above by proving that \eqref{eq:fixed_parts} satisfies the recurrence given in Proposition \ref{p:recursion}.
Let us now denote by $\mathcal{B}(n_1,n_2,\ldots,n_p)$ the number of non-isomorphic acyclic orientations of a complete multipartite graph $K_{n_1,n_2,\ldots,n_p}$.
The encoding given in Theorem \ref{th:coding} allows us to obtain the number of non-isomorphic acyclic orientations of complete multipartite graphs, see Theorem \ref{th:non-isomorphics}.

\begin{table}[ht]
\setlength{\tabcolsep}{1mm} 
\def\arraystretch{1.0} 
\centering
\begin{tabular}{|c|c|c|}
\hline
\begin{tikzpicture}[thick,
  every node/.style={draw,circle},
  fsnode/.style={fill=myblue},
  ssnode/.style={fill=mygreen},
  every fit/.style={ellipse,draw,inner sep=2pt,text width=1cm},
  ->,shorten >= 4pt,shorten <= 4pt
]
\begin{scope}[start chain=going below,node distance=7mm]
\foreach \i in {1,2}
  \node[fsnode,on chain] (f\i) [] {};
\end{scope}

\begin{scope}[xshift=2.5cm,yshift=0cm,start chain=going below,node distance=7mm]
\foreach \i in {3,4}
  \node[ssnode,on chain] (s\i) [] {};
\end{scope}

\node [myblue,fit=(f1) (f2),label=above:$n_0$] {};
\node [mygreen,fit=(s3) (s4),label=above:$n_1$] {};

\draw (f1) -- (s3);
\draw (f1) -- (s4);
\draw (f2) -- (s3);
\draw (f2) -- (s4);
\end{tikzpicture}

&\begin{tikzpicture}[thick,
  every node/.style={draw,circle},
  fsnode/.style={fill=myblue},
  ssnode/.style={fill=mygreen},
  every fit/.style={ellipse,draw,inner sep=2pt,text width=1cm},
  ->,shorten >= 4pt,shorten <= 4pt
]
\begin{scope}[start chain=going below,node distance=7mm]
\foreach \i in {1,2}
  \node[fsnode,on chain] (f\i) [] {};
\end{scope}

\begin{scope}[xshift=2.5cm,yshift=0cm,start chain=going below,node distance=7mm]
\foreach \i in {3,4}
  \node[ssnode,on chain] (s\i) [] {};
\end{scope}

\node [myblue,fit=(f1) (f2),label=above:$n_0$] {};
\node [mygreen,fit=(s3) (s4),label=above:$n_1$] {};

\draw (s3) -- (f1);
\draw (s4) -- (f1);
\draw (f2) -- (s3);
\draw (f2) -- (s4);
\end{tikzpicture}

&\begin{tikzpicture}[thick,
  every node/.style={draw,circle},
  fsnode/.style={fill=myblue},
  ssnode/.style={fill=mygreen},
  every fit/.style={ellipse,draw,inner sep=2pt,text width=1cm},
  ->,shorten >= 4pt,shorten <= 4pt
]
\begin{scope}[start chain=going below,node distance=7mm]
\foreach \i in {1,2}
  \node[fsnode,on chain] (f\i) [] {};
\end{scope}

\begin{scope}[xshift=2.5cm,yshift=0cm,start chain=going below,node distance=7mm]
\foreach \i in {3,4}
  \node[ssnode,on chain] (s\i) [] {};
\end{scope}

\node [myblue,fit=(f1) (f2),label=above:$n_0$] {};
\node [mygreen,fit=(s3) (s4),label=above:$n_1$] {};

\draw (f1) -- (s3);
\draw (s4) -- (f1);
\draw (f2) -- (s3);
\draw (f2) -- (s4);
\end{tikzpicture}
\\ \( 0.0.1.1\)&\(0.1.1.0\)&\(0.1.0.1\)
\\ \hline
\end{tabular}

\caption{All non-isomorphic acyclic orientations of $K_{2,2}$ with unlabeled vertices and their corresponding codes}
\label{tab:isomorph}
\end{table}


\begin{theorem}\label{th:non-isomorphics}
  \[
  \mathcal{B}(n_1,n_2,\ldots,n_p)=\frac{{n_1+n_2+\ldots+n_p \choose n_1,n_2,\ldots,n_p}}{r_1! r_2!\ldots r_s!}
  \]
  where ${n_1+n_2+\ldots+n_p \choose n_1,n_2,\ldots,n_p}$ is the multinomial coefficient and $r_1,r_2,\ldots,r_s$ are the number of parts in $K_{n_1,n_2,\ldots,n_p}$ grouped by the same size (i.e. $r_i$ for $i \in \{1,...,s\}$ suggests there are $r_i$ many parts that contain the same fixed number of vertices).
\end{theorem}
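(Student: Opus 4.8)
The plan is to interpret $\mathcal{B}(n_1,\ldots,n_p)$ as the number of orbits of the set of codes from Theorem~\ref{th:coding} under the natural action of the group $G := S_{r_1}\times S_{r_2}\times\cdots\times S_{r_s}$ that permutes parts of equal size, and then to show this action is free so that $\mathcal{B}=\mathcal{A}/|G|$. Recall from Theorems~\ref{th:coding} and~\ref{th:fixed_parts} that the acyclic orientations of $K_{n_1,\ldots,n_p}$ with \emph{labelled} parts and unlabelled vertices are in bijection with the valid codes, of which there are $\binom{n_1+\cdots+n_p}{n_1,\ldots,n_p}$. I would let $G$ act on a code $c$ by relabelling its alphabet: an element $\sigma\in G$ (which permutes part-labels while preserving part sizes) sends $c=c_1c_2\cdots c_N$ to the string $\sigma(c_1)\sigma(c_2)\cdots\sigma(c_N)$, which is again a valid code for the part sizes $n_1,\ldots,n_p$.

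The first and main step is to prove that two labelled-part orientations are isomorphic as directed graphs if and only if their codes lie in the same $G$-orbit; granting this, $\mathcal{B}$ equals the number of $G$-orbits. In the forward direction I would use that in a complete multipartite graph the parts are exactly the classes of the non-adjacency relation, so any digraph isomorphism $\phi$ carries parts to parts of equal size and hence induces an element $\sigma\in G$; one then checks that the encoding \eqref{eq:code_function} of Theorem~\ref{th:coding} is \emph{equivariant}, namely that $\phi$ turns each source in part $a$ into a source in part $\sigma(a)$ at every stage of the recursion, so that the code of $\phi(\mathcal{K})$ is $\sigma$ applied symbol-by-symbol to the code of $\mathcal{K}$. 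Conversely, if two codes differ by some $\sigma\in G$, then since a code determines its labelled-part orientation uniquely, relabelling the parts of one orientation by $\sigma$ produces the other, and this relabelling is itself a graph isomorphism. I expect this equivariance verification, together with the clean identification of parts as non-adjacency classes, to be the crux of the argument.

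The final step is to show the $G$-action on codes is free, whence every orbit has size exactly $|G|=r_1!r_2!\cdots r_s!$ and therefore
\[
\mathcal{B}(n_1,\ldots,n_p)=\frac{\mathcal{A}(n_1,\ldots,n_p)}{r_1!r_2!\cdots r_s!}=\frac{\binom{n_1+\cdots+n_p}{n_1,\ldots,n_p}}{r_1!r_2!\cdots r_s!}
\]
by Theorem~\ref{th:fixed_parts}. Freeness is the easy point: if $\sigma\in G$ is nontrivial it moves some part-label $a$ to a different label $b\neq a$ of the same size, and because every part is nonempty ($n_a\ge 1$) the symbol $a$ occurs in every code; applying $\sigma$ replaces those occurrences by $b\neq a$, so $\sigma$ fixes no code. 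One can sanity-check the whole scheme against Table~\ref{tab:isomorph}: for $K_{2,2}$ the six codes split under the swap $0\leftrightarrow 1$ into the three orbits $\{0011,1100\}$, $\{0101,1010\}$, $\{0110,1001\}$, giving $\mathcal{B}(2,2)=6/2!=3$.
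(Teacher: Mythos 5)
Your proposal is correct and is essentially the same argument as the paper's: the paper likewise observes that any isomorphism carries parts to parts of equal size and sources to sources at every stage of the source-removal encoding of Theorem~\ref{th:coding}, so that codes of isomorphic orientations differ exactly by a size-preserving permutation of the symbols, and then divides $\mathcal{A}(n_1,\ldots,n_p)$ by $r_1!\,r_2!\cdots r_s!$. Your orbit-counting packaging (action of $S_{r_1}\times\cdots\times S_{r_s}$ on codes, plus the explicit freeness check using $n_a\ge 1$) merely formalizes what the paper asserts in passing, namely that each isomorphism class corresponds to exactly $r_1!\cdots r_s!$ distinct codes.
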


\begin{proof}
 Let $\mathcal{K}_1,\mathcal{K}_2$ be two isomorphic acyclic orientations of a multipartite graph $G$. 
 Note that every isomorphism $\sigma:V(\mathcal K_1)\to V(\mathcal K_2)$ matches all vertices of each part of $\mathcal K_1$ with all vertices in a part of $\mathcal K_2$; otherwise, $\sigma$ doesn't preserve adjacency. Therefore, $\mathcal{A}(n_1,\ldots,n_k)=\mathcal{B}(n_1,\ldots,n_k)$ if $n_i\neq n_j$ when $i\neq j$ since the vertices in each part are unlabelled. 
 Then $\sigma$ matches all sources of $\mathcal{K}_1$ with the sources of $\mathcal{K}_2$. Moreover, if we remove all sources from both $\mathcal{K}_1$ and $\mathcal{K}_2$ obtaining $\mathcal{K}_1^\prime$ and $\mathcal{K}_2^\prime$, respectively, then $\sigma$ matches the sources of $\mathcal{K}_1^\prime$ and $\mathcal{K}_2^\prime$ as well. We can repeat this removing procedure until a null-graph is obtained.
 Hence, the codes assigned to $\mathcal{K}_1$ and $\mathcal{K}_2$ by Theorem \ref{th:coding} match each other except by a possible permutation within of the parts with the same size. 
 Besides, if there are exactly $r_1$ parts with the same size in $G$, we have $r_1!$ different codes associated to isomorphic acyclic orientations when the (digits) codes assigned to the $r_1$ equal-size parts are permuted and kept the same places for the other digits. Therefore, by grouping the parts of $G$ with the same size, we obtain the result.
\end{proof}

 From Theorem 2.7, we have that $\mathcal{B}(2,2) = 3$. Table \ref{tab:isomorph} shows all 3 non-isomorphic acyclic orientations of $K_{2,2}$ with their corresponding codes. Note that by swapping the digits $0$'s and $1$'s we obtain isomorphic acyclic orientations due to both parts have the same size. Permuting vertices within each part also obtains an isomorphic orientation. 
 Let us denote by $\mathcal{C}(n_1,n_2,\ldots,n_p)$ the number of non-isomorphic acyclic orientations of a complete multipartite graph $K_{n_1,n_2,\ldots,n_p}$ which contains a directed spanning tree.
 Similarly, we can obtain the number of non-isomorphic acyclic orientations of complete multipartite graphs containing a directed spanning tree, see Theorem \ref{th:span_trees}.

\begin{theorem}\label{th:span_trees}
  Let $n_1,\ldots,n_p$ be $p$ positive integer numbers and $N=n_1+\ldots+n_p$. We have
  \[
    \mathcal C(n_1,\ldots,n_p)=\displaystyle \frac{{N \choose n_1,\ldots,n_p}}{r_1!\ldots r_s!}  \cdot \frac{N^2-\sum_{r=1}^p (n_r)^2}{N(N-1)}.
  \]
  where $r_1,r_2,\ldots,r_s$ are the number of parts in $K_{n_1,n_2,\ldots,n_p}$ grouped by the same size.
\end{theorem}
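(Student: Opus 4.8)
The plan is to translate the directed-spanning-tree condition into a condition on the code of Theorem~\ref{th:coding} and then count the admissible codes, dividing at the end by $r_1!\cdots r_s!$ exactly as in Theorem~\ref{th:non-isomorphics}. By Corollary~\ref{c:tree}, and since $K_{n_1,\ldots,n_p}$ with $p\ge2$ positive parts has no isolated vertices, an acyclic orientation admits a directed spanning tree if and only if it has a unique source. So the task reduces to counting the non-isomorphic acyclic orientations with exactly one source.

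The key step is the claim that, for an acyclic orientation $\mathcal K$ of $K_{n_1,\ldots,n_p}$, the number of sources of $\mathcal K$ equals the length of the maximal leading run of equal symbols in its code $C(\mathcal K)$. First I would recall that, by Lemma~\ref{l:ss}, all sources of $\mathcal K$ lie in a single part, say part $i$, and suppose there are $m$ of them. Removing one source $v$ (which records the symbol $i$ and produces $f(\mathcal K)$) does not change the in-degree of any other vertex of part $i$, since no two vertices of the same part are adjacent; hence the remaining $m-1$ sources of part $i$ stay sources and no new source in part $i$ is created. If $m\ge2$, these surviving sources witness, via Lemma~\ref{l:ss}, that every source of $f(\mathcal K)$ again lies in part $i$, so the next recorded symbol is again $i$; iterating, the leading run has length at least $m$. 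Conversely, after the $m$-th removal there is no source left in part $i$, yet Lemma~\ref{l:ss} still guarantees a source, which must therefore lie in some part $j\ne i$; so the $(m+1)$-st symbol differs from $i$ and the leading run has length exactly $m$. In particular, $\mathcal K$ has a unique source if and only if the first symbol of $C(\mathcal K)$ differs from the second. This code claim is the main obstacle; the rest is routine.

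With this in hand, I would count the codes whose first two symbols differ. By Theorem~\ref{th:fixed_parts} the codes of acyclic orientations of $K_{n_1,\ldots,n_p}$ are exactly the length-$N$ strings over a $p$-letter alphabet having $n_r$ copies of the $r$-th symbol, and there are ${N\choose n_1,\ldots,n_p}$ of them. Among all such arrangements the number in which positions $1$ and $2$ carry the same symbol is a fraction
\[
\sum_{r=1}^{p}\frac{n_r}{N}\cdot\frac{n_r-1}{N-1}=\frac{\sum_{r=1}^{p}n_r^2-N}{N(N-1)}
\]
of the total, so the fraction with distinct first two symbols is $1-\frac{\sum_r n_r^2-N}{N(N-1)}=\frac{N^2-\sum_{r=1}^{p}n_r^2}{N(N-1)}$. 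Hence the number of unique-source orientations with labelled parts and unlabelled vertices is ${N\choose n_1,\ldots,n_p}\cdot\frac{N^2-\sum_r n_r^2}{N(N-1)}$.

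Finally, I would pass from labelled to unlabelled parts exactly as in the proof of Theorem~\ref{th:non-isomorphics}: an isomorphism between two such orientations permutes the equal-size parts and preserves the number of sources, so the subset of unique-source codes is invariant under, and acted on freely by, the group permuting the labels of equal-size parts; each isomorphism class therefore contains precisely $r_1!\cdots r_s!$ codes. Dividing the count of the previous paragraph by this factor yields
\[
\mathcal C(n_1,\ldots,n_p)=\frac{{N\choose n_1,\ldots,n_p}}{r_1!\cdots r_s!}\cdot\frac{N^2-\sum_{r=1}^{p}n_r^2}{N(N-1)},
\]
as claimed, with $N(N-1)\ne0$ since $p\ge2$ and the $n_r$ are positive.
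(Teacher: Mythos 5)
Your proposal is correct and follows essentially the same route as the paper's own proof: reduce via Corollary~\ref{c:tree} to unique-source orientations, identify these with the codes of Theorem~\ref{th:coding} whose first two symbols differ, count that the fraction of such codes is $\frac{N^2-\sum_{r=1}^p n_r^2}{N(N-1)}$, and divide by $r_1!\cdots r_s!$ as in Theorem~\ref{th:non-isomorphics}. The only difference is that you explicitly prove the key claim (number of sources equals the length of the leading run of equal symbols in the code), which the paper asserts implicitly without justification, so your write-up is in fact more complete than the published argument.
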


\begin{proof}
 The result is a direct consequence of counting the number of codes as in Theorem \ref{th:coding}, \emph{i.e.}, numbers in the numerical  system of base $p$ with length $N:=n_1+n_2+\ldots+n_p$ and with exactly $n_1$ digits $0$, $n_2$ digits $1$, \ldots, $n_p$ digits $p-1$ such that the first two digits are distinct. That is
 \[
  \displaystyle\sum_{1\le i<j\le p} 2\,\mathcal{A}(\ldots,n_i-1,\ldots,n_j-1,\ldots)= \frac{\mathcal{A}(n_1,\ldots,n_p)}{N(N-1)} \, \sum_{1\le i<j\le p}\,2n_i n_j
 \]
 Then, by considering the isomorphic directed graphs obtained by permuting the parts with the same size, we obtain
 \[
  \mathcal C(n_1,\ldots,n_p)=\displaystyle \frac{\mathcal{A}(n_1,\ldots,n_p)}{r_1!\ldots r_s!} \, \frac{N^2-\sum_{r=1}^p (n_r)^2}{N(N-1)}.
 \]
\end{proof}

\
\section{Acyclic orientations of a complete multipartite graph with labelled vertices}\label{Sect_labelled}

In the previous section, we considered multipartite graphs with unlabeled vertices. In this section we deal with the number of acyclic orientations of complete multipartite graphs with labelled vertices. 
Note that using the coding in \eqref{eq:code_function} we can also obtain the poly-Bernoulli numbers $B_{n_1,n_2}$, \emph{i.e.}, number of acyclic orientation of a labelled complete bipartite graph $K_{n_1,n_2}$ with size of each part $n_1$ and $n_2$, respectively, see Proposition \ref{p:labelled}. It is well-known that poly-Bernoulli number also counts, for instance, the lonesum matrices \cite{Br} and Callan permutations \cite{BH} among other things.
The closed formula below was given by Arakawa and Kaneko in \cite{AK}.

\begin{proposition}\label{p:labelled}
 The number of acyclic orientations of a complete bipartite graph $K_{n_1,n_2}$ with labeled vertices and size of each part $n_1$ and $n_2$, respectively, is
 \begin{equation}\label{eq:labelled}
     B_{n_1,n_2}=\displaystyle\sum_{m=0}^{\min\{n_1,n_2\}} \, (m!)^2\,{n_1+1\brace m+1}\,{n_2+1\brace m+1},
 \end{equation}
where ${r\brace s}$ denotes the Stirling number of the second kind. 
\end{proposition}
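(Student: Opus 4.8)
The plan is to promote the source–removal encoding of Theorem \ref{th:coding} to the labelled setting and then reduce the count to a one–line Stirling identity. Write $X,Y$ for the two parts, $|X|=n_1$, $|Y|=n_2$. The process underlying Theorem \ref{th:coding} attaches to each acyclic orientation $\mathcal K$ of $K_{n_1,n_2}$ its sequence of successive source–sets $B_1,B_2,\ldots,B_t$, where $B_1$ is the set of all sources of $\mathcal K$, $B_2$ the set of all sources after deleting $B_1$, and so on. First I would record two structural facts, both immediate from Lemma \ref{l:ss}: each $B_i$ lies in a single part, and consecutive blocks lie in different parts. The latter holds because, after deleting a source–set contained in $X$, no remaining vertex of $X$ can become a source (all its in–neighbours lie in $Y$ and none were deleted), so the next source–set lies in $Y$, and symmetrically. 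Conversely, any ordered partition of $X\cup Y$ into nonempty single–part blocks that strictly alternate between the two parts is the source–layering of exactly one acyclic orientation, namely the one orienting each edge from its earlier block to its later block. This yields a bijection between acyclic orientations of the labelled graph $K_{n_1,n_2}$ and such alternating ordered block partitions, which is the labelled refinement of the code in \eqref{eq:code_function}.

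Next I would count the layerings. Fixing the number $a$ of $X$–blocks and $b$ of $Y$–blocks, the $X$–labels form an ordered set partition into $a$ nonempty blocks, counted by $a!\,{n_1\brace a}$, and the $Y$–labels similarly give $b!\,{n_2\brace b}$; the merge into one alternating sequence is then forced, with $2$ choices when $a=b\ge 1$, one choice when $|a-b|=1$, and none when $|a-b|\ge 2$. Summing, the number of acyclic orientations of labelled $K_{n_1,n_2}$ equals
\[
\sum_{m\ge 1} 2\,(m!)^2{n_1\brace m}{n_2\brace m}
+\sum_{m\ge 0}(m+1)!\,m!\,{n_1\brace m+1}{n_2\brace m}
+\sum_{m\ge 0}m!\,(m+1)!\,{n_1\brace m}{n_2\brace m+1}.
\]

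Finally I would match this with the right–hand side of \eqref{eq:labelled} by applying the recurrence ${n+1\brace k+1}=(k+1){n\brace k+1}+{n\brace k}$ to each factor of ${n_1+1\brace m+1}{n_2+1\brace m+1}$. Expanding the product gives four terms; after multiplying by $(m!)^2$ and summing over $m$, the two diagonal pieces (from the $(m+1)^2$ factor and from the constant term) combine, after the reindexing $m\mapsto m+1$, into $2\sum_{m\ge 1}(m!)^2{n_1\brace m}{n_2\brace m}$ together with the boundary constant $(0!)^2{n_1\brace 0}{n_2\brace 0}$, while the two mixed pieces reproduce the $|a-b|=1$ sums above; hence the two expressions agree. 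I expect the bijection to be routine given the machinery already in place; the main obstacle is the bookkeeping in this last step—tracking the $m=0$ boundary terms (the constant $(0!)^2{n_1\brace 0}{n_2\brace 0}$ is exactly the empty orientation when $n_1=n_2=0$) and the reindexing so that the factor $2$ on the diagonal emerges correctly. As an alternative, one could verify that the layered count obeys the same recurrence as the poly–Bernoulli numbers and invoke the closed form of Arakawa and Kaneko \cite{AK} directly.
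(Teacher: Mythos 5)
Your proposal is correct and follows essentially the same route as the paper: the paper also groups the labelled acyclic orientations by the numbers of alternating single-part source blocks (which differ by at most one, with a factor $2$ when equal), counts each part's blocks by $m!\,{n\brace m}$, and reconciles the resulting three sums with \eqref{eq:labelled} via the recurrence ${r+1\brace s}=s{r\brace s}+{r\brace s-1}$. The only differences are cosmetic: you make the source-layering bijection explicit (the paper leaves it implicit, citing its encoding), and you run the Stirling-recurrence algebra from the right-hand side rather than the left.
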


\begin{proof}
 Without loss of generality we can assume that $n_1\leq n_2$.
 Notice that we may count the number of acyclic orientations of $K_{n_1,n_2}$ by grouping them according to $m\ge1$, the number of groups of $0$'s codes (code $0$ associated to part with size $n_1$) separated by at least a code $1$. In this case, the number of groups of $1$'s codes separated by at least a code $0$ must be either $m-1$, $m$ or $m+1$. 
 We recall ${r\brace s}$ is the number of partitions of a set with $r$ elements into $s$ parts is ${r\brace s}$, and the well-known identity
 \[
  {r+1\brace s}=s{r\brace s}+{r\brace s-1} \qquad \text{for all }r,s\ge0.
 \]
 Thus we count it by consider the number of partitions of the $n_1$ elements in Part 1 and the number of partitions of the $n_2$ elements in Part 2; follow by multiplying by the corresponding factorial given by the permutation of the distinct groups in the partition of each part. Then, we have
 \begin{align*}
     &\displaystyle\sum_{m=1}^{n_1} \left( m!(m-1)!{n_1\brace m}{n_2\brace m-1} + 2\,m!m!{n_1\brace m}{n_2\brace m} + m!(m+1)!{n_1\brace m}{n_2\brace m+1} \right) \\
     = & \sum_{m=1}^{n_1} m!(m-1)!{n_1\brace m}\left( {n_2\brace m-1} + m\,{n_2\brace m}\right) + \sum_{m=1}^{n_1} (m!)^2\,{n_1\brace m}\left( {n_2\brace m} + (m+1)\,{n_2\brace m+1}\right) \\
     =& \sum_{m=1}^{n_1} m!(m-1)!{n_1\brace m}{n_2+1\brace m} + \sum_{m=1}^{n_1} (m!)^2\,{n_1\brace m}{n_2+1\brace m+1} \\
     =& \sum_{m=0}^{n_1-1} (m+1)!m!{n_1\brace m+1}{n_2+1\brace m+1} + \sum_{m=1}^{n_1} (m!)^2\,{n_1\brace m}{n_2+1\brace m+1} \\
     =& \sum_{m=0}^{n_1} (m+1)!m!{n_1\brace m+1}{n_2+1\brace m+1} + \sum_{m=0}^{n_1} (m!)^2\,{n_1\brace m}{n_2+1\brace m+1} \\
     =& \sum_{m=0}^{n_1} (m!)^2\left( (m+1)\,{n_1\brace m+1} + {n_1\brace m} \right){n_2+1\brace m+1} \\
     =& \sum_{m=0}^{n_1} (m!)^2{n_1+1\brace m+1} {n_2+1\brace m+1} \\
     =& \,B_{n_1,n_2}
. \end{align*}
\end{proof}

In order to obtain the result for the number of acyclic orientation of a labelled multipartite graph with $p\geq3$ parts in a similar way, we define $X_{k_1,k_2,\ldots,k_p}$ by the number of strings in the alphabet $\mathcal{S}:=\{s_1,s_2,\ldots,s_p\}$ with $k_1$ characters $s_1$, $k_2$ characters $s_2$, and so on with $k_p$ characters $s_p$ such that no two consecutive characters are the same. We define  $X^{(i)}_{k_1,k_2,\ldots,k_p}$ by the number of strings in $\mathcal{S}$ with $k_1$ characters $s_1$, $k_2$ characters $s_2$, and so on with $k_p$ characters $s_p$ such that there are no two consecutive identical characters and the first character is $s_i$ for $1\leq i\leq p$. Clearly, we have 
\begin{equation}\label{eq:D-ABC}
    X_{k_1,\ldots,k_p}=X^{(1)}_{k_1,\ldots,k_p}+X^{(2)}_{k_1,\ldots,k_p}+\ldots+X^{(p)}_{k_1,\ldots,k_p}.
\end{equation}

Note that for some $p$-tuples $(k_1,\ldots,k_p)\in\mathbb{N}^p$, we have $X_{k_1,\ldots,k_p}=0$, for instance, $X_{2,0,\ldots,0}=0$ since we cannot alternate two characters $s_1$ and no other characters without leaving two consecutive characters $s_1$. Moreover, $X_{k_1,\ldots,k_p}>0$ if and only if $(k_1,\ldots,k_p)\in\mathbb{T}$ where 
\[
\mathbb{T}:=\left\{(k_1,\ldots,k_p)\in\mathbb{N}^p \,:\, \max\{k_1,k_2,\ldots,k_p\} \leq \frac{1 + \sum_{i=1}^{p}k_i}2 \right\}.
\]
Hence, we have

\begin{equation}\label{eq:disjoint}
        X^{(j)}_{k_1,k_2,\ldots,k_p} \, = \displaystyle\sum_{i=1}^{p} \, X^{(i)}_{k_1,\ldots,k_j-1,\ldots,k_p} \,  - \, X^{(j)}_{k_1,\ldots,k_j-1,\ldots,k_p} \, + \chi_{\{e_j\}} (k_1,k_2,\ldots,k_p)
\end{equation}
for all  $k_1,k_2,\ldots,k_p\in \mathbb{N}$ and $1\le j\le p$ where $e_j$ represents the $j^{th}$ canonical vector of $\mathbb{R}^p$ and $\chi_A$ is the indicator of $A$. 
For obvious reasons consider $X_{k_1,k_2,\ldots,k_p}=X^{(j)}_{k_1,k_2,\ldots,k_p}=0$ if $k_i<0$ for some $1\le i\le p$ and for every $1\le j\le p$. Note that, on the one hand, if $X^{(1)}_{k_1,k_2,\ldots,k_p}>0$, by removing the first character (a $s_1$) from each string counted in $X^{(1)}(k_1,k_2,\ldots,k_p)$ we obtain a string counted in $X^{(j)}_{k_1-1,k_2,\ldots,k_p}$ for some $2\le j\le p$, except when $(k_1,k_2,\ldots,k_p)=(1,0,\ldots,0)$ where we consider that the empty string $\lambda$ did not count. Besides, by adding a character $s_1$ to the front of each string counted in $X^{(j)}_{k_1-1,k_2,\ldots,k_p}$ for $2\le j\le p$ we obtain distinct strings in $X^{(1)}_{k_1,k_2,\ldots,k_p}$; analogously, we obtain the corresponding relations for $X^{(j)}_{k_1,k_2,\ldots,k_p}$ for $2\le j\le p$, respectively. 
On the other hand, if $X^{(1)}_{k_1,k_2,\ldots,k_p}=0$, then we have either $k_1>1+\sum_{i=2}^{p} k_i$ and consequently $X^{(j)}_{k_1-1,k_2,\ldots,k_p}=0$ for $2\le j\le p$, or there are more characters $s_j$ than the other characters for some $2\le j\le p$ making $X^{(i)}_{k_1-1,k_2,\ldots,k_p}=0$ for every $2\le i\le p$. We can obtain analogous equations to \eqref{eq:disjoint}. 

Now we define a $p$-variables ordinary generating function 
\[
  \mathcal{F}(x_1,\ldots,x_p):=\displaystyle\sum_{k_1,\ldots,k_p\in\mathbb{N}} X_{k_1,\ldots,k_p}\,x_1^{k_1}x_2^{k_2}\cdot\ldots\cdot x_p^{k_p}.
\]
Note that $\mathcal{F}(k_1,\ldots,k_p)$ converges absolutely on $|x_1|+\ldots+|x_p|<1$ since $0\leq X_{k_1,\ldots,k_p}\leq {k_1+\ldots+k_p \choose k_1,\ldots,k_p}$ for all $k_1,\ldots,k_p\in\mathbb{N}$.

\begin{proposition}\label{p:generating}
    We have
    \[
      \mathcal{F} (x_1,\ldots,x_p) = \displaystyle\frac{ \frac{x_1}{x_1+1} + \ldots + \frac{x_p}{x_p+1} }{ 1 - \left(\frac{x_1}{x_1+1} + \ldots + \frac{x_p}{x_p+1}\right) }.
    \]
\end{proposition}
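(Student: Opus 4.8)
The plan is to package the refined counts $X^{(j)}_{k_1,\ldots,k_p}$ into their own generating functions and to convert the recurrence \eqref{eq:disjoint} into a single linear functional equation. First I would introduce, for each $1\le j\le p$,
\[
  \mathcal{F}^{(j)}(x_1,\ldots,x_p):=\sum_{k_1,\ldots,k_p\in\mathbb{N}} X^{(j)}_{k_1,\ldots,k_p}\,x_1^{k_1}\cdots x_p^{k_p},
\]
which converges absolutely on $|x_1|+\ldots+|x_p|<1$ because $0\le X^{(j)}_{k_1,\ldots,k_p}\le X_{k_1,\ldots,k_p}$ there. Summing the decomposition \eqref{eq:D-ABC} over the same polydisc then gives $\mathcal{F}=\mathcal{F}^{(1)}+\ldots+\mathcal{F}^{(p)}$, so it suffices to determine each $\mathcal{F}^{(j)}$.

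Next I would translate \eqref{eq:disjoint} term by term into an identity among these series. The key bookkeeping observation is that decrementing a single index, $(k_1,\ldots,k_p)\mapsto(k_1,\ldots,k_j-1,\ldots,k_p)$, corresponds exactly to multiplication by $x_j$: the convention that $X^{(i)}_{k_1,\ldots,k_p}=0$ whenever an index is negative makes the shifted sum run only over $k_j\ge1$, so it equals $x_j\mathcal{F}^{(i)}$. Likewise the indicator $\chi_{\{e_j\}}$ contributes the single monomial $x_j$. Multiplying \eqref{eq:disjoint} by $x_1^{k_1}\cdots x_p^{k_p}$ and summing over all $(k_1,\ldots,k_p)\in\mathbb{N}^p$ therefore yields, for each $j$,
\[
  \mathcal{F}^{(j)} = x_j\,\mathcal{F} - x_j\,\mathcal{F}^{(j)} + x_j,
\]
where $\mathcal{F}=\sum_{i=1}^p\mathcal{F}^{(i)}$. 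Solving this for $\mathcal{F}^{(j)}$ gives the clean expression
\[
  \mathcal{F}^{(j)} = \frac{x_j}{1+x_j}\bigl(\mathcal{F}+1\bigr).
\]

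Finally I would sum over $j$ and reduce to a single scalar equation. Writing $S:=\sum_{j=1}^p \frac{x_j}{x_j+1}$ and using $\mathcal{F}=\sum_j\mathcal{F}^{(j)}$, the previous display gives $\mathcal{F}=(\mathcal{F}+1)S$, hence $\mathcal{F}(1-S)=S$, which rearranges to
\[
  \mathcal{F}=\frac{S}{1-S}=\frac{\frac{x_1}{x_1+1}+\ldots+\frac{x_p}{x_p+1}}{1-\left(\frac{x_1}{x_1+1}+\ldots+\frac{x_p}{x_p+1}\right)},
\]
the asserted formula. The step requiring the most care is the edge-case validity of \eqref{eq:disjoint}: the recurrence must genuinely hold for \emph{every} tuple $(k_1,\ldots,k_p)\in\mathbb{N}^p$, including the degenerate ones where some $X^{(j)}_{k_1,\ldots,k_p}$ vanish, since otherwise the term-by-term summation would pick up spurious boundary contributions; this is precisely what the discussion of the cases $X^{(1)}_{k_1,\ldots,k_p}>0$ and $X^{(1)}_{k_1,\ldots,k_p}=0$ preceding the statement establishes. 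The only remaining point is that $1-S\neq0$ on the domain of convergence, which is automatic because $\mathcal{F}=S/(1-S)$ has already been shown to converge absolutely there.
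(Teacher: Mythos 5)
Your proof is correct and follows essentially the same route as the paper: define the refined generating functions $\mathcal{F}^{(j)}$, sum the recurrence \eqref{eq:disjoint} to obtain $\mathcal{F}^{(j)} = x_j\,\mathcal{F} - x_j\,\mathcal{F}^{(j)} + x_j$, solve for $\mathcal{F}^{(j)}$, and sum over $j$ using \eqref{eq:D-ABC}. Your closing remark on $1-S\neq 0$ is better justified directly from the functional equation $\mathcal{F}(1-S)=S$ (if $1-S=0$ then $S=0$, a contradiction) rather than by appeal to convergence, but this is a cosmetic point, not a gap.
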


\begin{proof}
    Since $\mathcal F (x_1,\ldots,x_p)$ converges absolutely in a domain including $\Omega:=\{(x_1,\ldots,x_p)\in\mathbb{R}^p \,:\, |x_1|+\ldots+|x_p|<1\}$. We may reorder its terms of summation without affecting the sum.
    Define now
    \[
        \mathcal{F}^{(j)}(x_1,\ldots,x_p)  :=\displaystyle\sum_{k_1,\ldots,k_p\in\mathbb{N}} X^{(j)}_{k_1,\ldots,k_p}\,x_1^{k_1}x_2^{k_2}\ldots x_p^{k_p} \quad \text{for every } 1\le j\le p.
    \]
    
    Now by performing the summation of \eqref{eq:disjoint} for every $(k_1,\ldots,k_p)\in\mathbb{N}^p$, we obtain
    \begin{equation}\label{eq:gen1}
        \mathcal{F}^{(j)}(x_1,\ldots,x_p)  =x_j\, \displaystyle\sum_{i=1}^{p} \mathcal{F}^{(i)}(x_1,\ldots,x_p) - x_j\, \mathcal{F}^{(j)}(x_1,\ldots,x_p) + x_j  \quad \text{for every } 1\le j\le p.
    \end{equation}

    Note that we can re-write \eqref{eq:gen1} as follows

    \begin{equation}\label{eq:gen2}
        (x_j+1)\, \mathcal{F}^{(j)}(x_1,\ldots,x_p)  =x_j\, \mathcal{F}(x_1,\ldots,x_p) + x_j \quad \text{for every } 1\le j\le p.
    \end{equation}

    Note that $x_1,\ldots,x_p\neq-1$ since $|x_1|+\ldots+|x_p| < 1$. So, we can rewrite \eqref{eq:gen2} as follows  

    \begin{equation}\label{eq:gen3}
        \mathcal{F}^{(j)}(x_1,\ldots,x_p) =\frac{x_j}{x_j+1}\, \mathcal{F}(x_1,\ldots,x_p) + \frac{x_j}{x_j+1} \quad \text{for every } 1\le j\le p.
    \end{equation}

    Indeed, by adding the equations involved in \eqref{eq:gen3} we obtain
    
    \begin{equation}\label{eq:gen4}
      \mathcal{F}(x_1,\ldots,x_p) = \displaystyle\frac{ \frac{x_1}{x_1+1} + \ldots + \frac{x_p}{x_p+1} }{ 1 - \frac{x_1}{x_1+1} - \ldots - \frac{x_p}{x_p+1} } = \sum_{n\ge1} \left( \frac{x_1}{x_1+1} + \ldots + \frac{x_p}{x_p+1} \right)^n
    \end{equation}   
\end{proof}

    Note that $\mathcal{F}(x_1,\ldots,x_p)$ converges (absolutely) if and only if $\displaystyle \left| \frac{x_1}{x_1+1} + \ldots + \frac{x_p}{x_p+1} \right|<1$. Indeed, we have that $\mathcal{F}(x_1,\ldots,x_p)$ converges absolutely on
    \[
     \Omega:=\left\{(x_1,\ldots,x_p)\in\mathbb{R}^p\,:\, \displaystyle |x_1|+\ldots+|x_p|<1 \, ,\, \left| \frac{x_1}{x_1+1} + \ldots + \frac{x_p}{x_p+1} \right|<1 \right\}.
    \]
    Note that there is a $p$-dimensional ball centered at the origin included in $\Omega$. 
    Now, using the closed formula of $\mathcal F$, we can obtain a closed formula for $X_{k_1,\ldots,k_p}$, see the following result.

\begin{theorem}\label{th:Dijk}
 For every $k_1,\ldots,k_p\in\mathbb{N}^+$ we have 
  \begin{equation}\label{eq:Dijk}
    X_{k_1,\ldots,k_p}= (-1)^{k_1+\ldots+k_p} \displaystyle\sum_{r_1=1}^{k_1}\ldots \sum_{r_p=1}^{k_p} {r_1+\ldots+r_p \choose r_1,\ldots,r_p} \prod_{1\le i\le p}  (-1)^{r_i} {k_i-1\choose r_i-1}.
 \end{equation}
\end{theorem}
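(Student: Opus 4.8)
The plan is to read off the coefficient $X_{k_1,\ldots,k_p}=[x_1^{k_1}\cdots x_p^{k_p}]\,\mathcal F$ directly from the closed form of the generating function established in Proposition~\ref{p:generating}. Writing $y_j:=\frac{x_j}{x_j+1}$ for brevity, equation~\eqref{eq:gen4} expresses $\mathcal F$ as the geometric-type series $\sum_{n\ge1}\bigl(y_1+\ldots+y_p\bigr)^n$, which converges absolutely on a neighborhood of the origin contained in $\Omega$. I would record at the outset that this absolute convergence is exactly what licenses all of the term-by-term rearrangements below and guarantees that the coefficient of each fixed monomial is a finite sum.

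First I would apply the multinomial theorem to each power, obtaining
\[
\mathcal F=\sum_{n\ge1}\ \sum_{r_1+\ldots+r_p=n}\binom{r_1+\ldots+r_p}{r_1,\ldots,r_p}\prod_{j=1}^p y_j^{r_j}
=\sum_{\substack{r_1,\ldots,r_p\ge0\\ r_1+\ldots+r_p\ge1}}\binom{r_1+\ldots+r_p}{r_1,\ldots,r_p}\prod_{j=1}^p \Bigl(\tfrac{x_j}{x_j+1}\Bigr)^{r_j}.
\]
Next I would expand each factor as a power series in $x_j$ via the generalized binomial series $(1+x_j)^{-r_j}=\sum_{m\ge0}(-1)^m\binom{r_j+m-1}{m}x_j^m$, so that
\[
\Bigl(\tfrac{x_j}{x_j+1}\Bigr)^{r_j}=x_j^{r_j}(1+x_j)^{-r_j}=\sum_{m\ge0}(-1)^m\binom{r_j+m-1}{m}\,x_j^{\,r_j+m}.
\]

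The heart of the computation is the coefficient extraction. To pick out the power $x_j^{k_j}$ from the $j$-th factor one must take $m=k_j-r_j$, contributing $(-1)^{k_j-r_j}\binom{k_j-1}{k_j-r_j}=(-1)^{k_j-r_j}\binom{k_j-1}{r_j-1}$ after the reindexing of the binomial coefficient; this simultaneously forces $1\le r_j\le k_j$, since the binomial coefficient vanishes for $r_j>k_j$ and the choice $r_j=0$ produces no positive power of $x_j$ when $k_j\ge1$. Multiplying over $j$ and collecting the signs through $\prod_j(-1)^{k_j-r_j}=(-1)^{k_1+\ldots+k_p}\prod_j(-1)^{r_j}$ then yields precisely the stated formula~\eqref{eq:Dijk}.

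The only genuinely delicate point is the justification of interchanging the (infinite) multinomial summation over $(r_1,\ldots,r_p)$ with the nested binomial expansions, which I would settle once by invoking the absolute convergence of $\mathcal F$ on $\Omega$; after that, the coefficient of a fixed $x_1^{k_1}\cdots x_p^{k_p}$ is a genuinely finite sum over $r_j\in\{1,\ldots,k_j\}$. Everything else is the routine bookkeeping of signs and the identity $\binom{k_j-1}{k_j-r_j}=\binom{k_j-1}{r_j-1}$, and no further idea is needed.
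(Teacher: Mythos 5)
Your proof is correct, and it shares its overall skeleton with the paper's: both read off $X_{k_1,\ldots,k_p}$ as the coefficient of $x_1^{k_1}\cdots x_p^{k_p}$ in the closed form of $\mathcal{F}$ from Proposition~\ref{p:generating}, and both begin by applying the multinomial theorem to $\bigl(\frac{x_1}{x_1+1}+\ldots+\frac{x_p}{x_p+1}\bigr)^n$. Where you genuinely diverge is in evaluating the one-variable coefficient $D(k,r):=[z^k]\bigl(\tfrac{z}{z+1}\bigr)^r$. The paper does this indirectly: it truncates $\frac{z}{z+1}$ to a Taylor polynomial, derives from the factorization of the $r$-th power a recurrence culminating in $D(k+1,r+1)=D(k,r)-D(k,r+1)$, substitutes $D(k,r)=(-1)^{k+r}E(k-1,k-r)$, and identifies $E(a,b)=\binom{a}{b}$ by uniqueness of the solution of Pascal's recurrence with the given boundary values. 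You instead compute the same coefficient in one step from the negative binomial series $(1+x_j)^{-r_j}=\sum_{m\ge0}(-1)^m\binom{r_j+m-1}{m}x_j^m$, obtaining $[x_j^{k_j}]\,x_j^{r_j}(1+x_j)^{-r_j}=(-1)^{k_j-r_j}\binom{k_j-1}{r_j-1}$ directly, and then collect signs via $\prod_j(-1)^{k_j-r_j}=(-1)^{k_1+\ldots+k_p}\prod_j(-1)^{r_j}$. Your route is shorter and eliminates the auxiliary sequences $D$ and $E$ entirely, at the cost of invoking the generalized binomial theorem; the paper's recurrence argument is more self-contained but much longer. Both arguments rest on the same analytic justification (absolute convergence of $\mathcal{F}$ on a neighborhood of the origin), which you correctly flag as what licenses the rearrangements, and your observations that $r_j=0$ contributes nothing when $k_j\ge1$ and that $r_j>k_j$ yields no admissible term reproduce exactly the paper's implicit restriction of the sum to $1\le r_j\le k_j$.
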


\begin{proof}
    $X_{k_1,\ldots,k_p}$ is the coefficient of $x_1^{k_1}x_2^{k_2}\ldots x_p^{k_p}$ in $\mathcal{F}(x_1,\ldots,x_p)$. We also have
    \[
      \frac{z}{z+1}=\displaystyle\sum_{n\ge1} (-1)^{n-1}z^n \qquad \text{ for every } |z|<1. 
    \]
    Moreover, using the Taylor polynomial of $\frac{z}{z+1}$ with Peano's form of remainder, we have
    \[
      \frac{z}{z+1}= z - z^2 + z^3 - \ldots - (-z)^{k} \, + \, \mathcal{O}(z^{k+1})  \quad \text{for every } k\ge1. 
    \]
    Note that $D(k,r)$ is also the coefficient of $z^k$ in  $\left(z - z^2 + z^3 - \ldots - (-z)^{k} \right)^r$. 
    Thus, from \eqref{eq:gen4} we have
    \[
      \mathcal{F}(x_1,\ldots,x_p) = \sum_{n\ge1} \big( x_1 - \ldots - (-x_1)^{k_1} + \mathcal{O}(x_1^{k_1+1}) \, + \, \ldots \, +\, x_p - \ldots - (-x_p)^{k_p} + \mathcal{O}(x_p^{k_p+1}) \big)^n
    \]
    Thus, we have that $X_{x_1,\ldots,k_p}$ is the coefficient of $x_1^{k_1}x_2^{k_2}\ldots x_p^{k_p}$ in
    \[
     \begin{aligned}
       & \displaystyle\sum_{n=p}^{k_1+\ldots+k_p} \big( x_1 - \ldots - (-x_1)^{k_1} \, + \, x_2 - \ldots - (-x_2)^{k_2} \, + \, \ldots \, + \, x_p - \ldots - (-x_p)^{k_p} \big)^n \\
       = & \displaystyle\sum_{n=p}^{k_1+\ldots+k_p} \sum_{r_1+\ldots+r_p=n} {n \choose r_1,\ldots,r_p} \big(x_1 - \ldots - (-x_1)^{k_1}\big)^{r_1}\big(x_2 - \ldots - (-x_2)^{k_2}\big)^{r_2}\ldots \big(x_p - \ldots - (-x_p)^{k_p}\big)^{r_p}
     \end{aligned}
    \]

    Then, by adding the coefficients of $x_1^{k_1}x_2^{k_2}\ldots x_p^{k_p}$ for each $n$ we obtain 
    \[
     X_{k_1,\ldots,k_p}= \displaystyle\sum_{n=1}^{k_1+\ldots+k_p} \sum_{r_1+\ldots+r_p=n} {n \choose r_1,\ldots,r_p} D(k_1,r_1)\,D(k_2,r_2)\,\ldots\,D(k_p,r_p)
    \]

%
Now, we can use combinatorial arguments to obtain a recurrence relation involving $\{D(k,r)\}_{k\ge r}$ and initial conditions that allow us to solve  $\{D_{k,r}\}_{k\ge r}$. 
Note that we have trivial relations on the double indices sequence that could work as initial conditions 
\begin{equation}\label{eq:rec_initials}
    D(k,0)=0, \, \forall k\in\mathbb{N}, \quad D(k,r)=0, \text{ if } k<r, \quad D(k,1)=(-1)^{k-1}, \, \forall k>0, \quad D(k,k)=1,\, \forall k>0.
\end{equation}
Using the fact that
\[
 \big(z-z^2+z^3-z^4+\ldots-(-z)^{k}\big)^{r+1} = \big(z-z^2+z^3-z^4+\ldots-(-z)^{k}\big)^{r} \, \big(z-z^2+z^3-z^4+\ldots-(-z)^{k}\big),
\]
we can also obtain the following recurrence relation that could be used to obtain $D(k,r)$ for whatever pair $(k,r)$ whenever $k\ge r$
\begin{equation}\label{eq:rec_relations}
    D(k,r+1) = D(k-1,r) - D(k-2,r) + D(k-3,r) - ... (-1)^{r-1} D(k-r,r).
\end{equation}
Hence, we have that \eqref{eq:rec_initials} and \eqref{eq:rec_relations} give an iterative way to solve $\{D_{k,r}\}_{k\ge r}$. 
Moreover, subtracting \eqref{eq:rec_relations} from the identity $D(k,r)=D(k,r)$, we obtain
\begin{equation}\label{eq:rec_D}
    D(k+1,r+1)=D(k,r)-D(k,r+1)
\end{equation}
Let $E(a,b)$ be the sequence that verifies 
\[
D(k,r):= (-1)^{k+r}E(k-1,k-r)
\]
Then, from \eqref{eq:rec_initials} we have the following initial conditions for $E(a,b)$
\begin{equation}\label{eq:rec_initialsE}
    E(a-1,a)=0, \quad
    E(a,-b)=0, \quad E(a,a)=1, \quad E(a,0)=1,\qquad \forall a\in\mathbb{N},\forall b>0.
\end{equation}
Moreover, we obtain the following recurrence relation for $E(a,b)$ from \eqref{eq:rec_D}
\begin{equation}\label{eq:rec_E}
    E(a+1,b)=E(a,b)+E(a,b-1) \quad \text{for every  } a,b\in\mathbb{N}.
\end{equation}
Then, uniqueness of $E(a,b)$ satisfying \eqref{eq:rec_initialsE} and \eqref{eq:rec_E} gives
\[
 E(a,b)={a \choose b} \quad \text{for every  } a,b\in\mathbb{N}.
\]
\end{proof}


Indeed, we have some identities involving $\left\{X_{k_1,\ldots,k_p}\right\}$ due to combinatorial arguments. For instance,
since $X_{k_1,\ldots,k_p}=0$ if $(k_1,\ldots,k_p)\notin\mathbb{T}$, we have, \emph{e.g.},
\[
 X_{k+2,k}=\displaystyle\sum_{n=1}^{2k+2} (-1)^{n} \sum_{r=0}^{n} {n \choose r}\, {k+1\choose k+2-r}\,{k-1 \choose k-n+r}=0 \qquad \text{for every } k\in\mathbb{N}.
\]
and since $X_{k,k}=2$ for $k>0$ and $X_{k,k+1}=1$ for $k\ge0$ we also have
\[
 \displaystyle\sum_{n=1}^{2k+1} (-1)^{n+1} \sum_{r=0}^{n} {n \choose r} \, {k \choose k+1-r} \, {k-1 \choose k-n+r} =1 \quad \text{for every } k\in\mathbb{N}
\]
and 
\[
 \displaystyle\sum_{n=1}^{2k} (-1)^{n} \sum_{r=0}^{n} {n \choose r}\, {k-1 \choose k-r}\, {k-1 \choose k-n+r}  \, =\, 2 \quad \text{for every } k\in\mathbb{N}^+.
\]

The following result shows a closed formula of the number of acyclic orientation of a complete multipartite graphs with labelled vertices, see Theorem \ref{th:acyc_labelled}. 

\begin{theorem}\label{th:acyc_labelled}
The number of acyclic orientation of a complete multipartite graph $K_{n_1,n_2,\ldots,n_p}$ with labelled vertices and $p$ parts with sizes of the parts $n_1,n_2,\ldots,n_p$, respectively, is
 \begin{equation}\label{eq:acyc_labelled}
     B_{n_1,n_2,\ldots,n_p}= \displaystyle\sum_{k_1\le n_1}\sum_{k_2\le n_2}\ldots\sum_{k_p\le n_p} k_1!k_2!\ldots k_p! {n_1 \brace k_1}{n_2\brace k_2}\ldots{n_p\brace k_p} \, X_{k_1,\ldots,k_p}
 \end{equation}
where ${r\brace s}$ denotes the Stirling number of the second kind. 
\end{theorem}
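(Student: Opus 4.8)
The plan is to read a labelled acyclic orientation through the source‑removal process underlying the code of Theorem \ref{th:coding}, but now recording \emph{which} labelled vertices disappear at each stage. First I would recall that, deleting the current sources repeatedly, all vertices removed at a given stage lie in one part and constitute one equivalence class of the relation $R_{\mathcal K}$; concretely, removing one source of a part leaves the remaining sources of that part untouched, since same‑part vertices are non‑adjacent. Collapsing the code into its maximal runs of identical symbols therefore produces an ordered partition $B_1,B_2,\ldots,B_L$ of $V(K_{n_1,\ldots,n_p})$ into non‑empty blocks, each contained in a single part, with consecutive blocks lying in distinct parts (maximality of the runs forces the latter). The goal is to count these ordered partitions.

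The key step is to prove that $\mathcal K\mapsto(B_1,\ldots,B_L)$ is a bijection between labelled acyclic orientations of $K_{n_1,\ldots,n_p}$ and such ordered partitions. Given an ordered partition, I would reconstruct an orientation by directing every edge from its endpoint in the earlier block toward the one in the later block; this is acyclic because the block index is a topological order, and a direct check shows its source set is exactly $B_1$ (any $v\in B_t$ with $t\ge2$ is adjacent to all of $B_{t-1}$, which is in a different part, hence receives an in‑edge), so iterating source removal recovers $B_1,\ldots,B_L$. Conversely the orientation determines its blocks, and permuting vertices within a single block — pairwise non‑adjacent, being in a common part — does not change any edge orientation, so the blocks are genuinely \emph{sets}, not sequences. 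This is the part demanding the most care: I must check that the ``no two consecutive blocks in the same part'' condition is precisely the one imposed by maximality, so that each orientation corresponds to exactly one ordered partition and nothing is over‑ or under‑counted.

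With the bijection established, I would count the admissible ordered partitions by first fixing the shape $(k_1,\ldots,k_p)$, where $k_i$ is the number of blocks taken from part $i$. The resulting sequence of part‑labels is a string over $\{1,\ldots,p\}$ with exactly $k_i$ letters equal to $i$ and no two consecutive letters equal, so the number of admissible label‑sequences is precisely $X_{k_1,\ldots,k_p}$. For each part $i$ independently, its $n_i$ labelled vertices must be distributed into the $k_i$ position‑distinguished blocks with none empty, which is the number of surjections $k_i!{n_i\brace k_i}$. Multiplying over parts and summing over shapes yields
\[
  B_{n_1,\ldots,n_p}=\sum_{k_1\le n_1}\ldots\sum_{k_p\le n_p} X_{k_1,\ldots,k_p}\prod_{i=1}^{p} k_i!{n_i\brace k_i},
\]
which is \eqref{eq:acyc_labelled}; terms with $k_i$ outside $\{1,\ldots,n_i\}$ vanish automatically because ${n_i\brace k_i}=0$ there. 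As a consistency check, specializing to $p=2$ collapses $X_{k_1,k_2}$ to the cases $|k_1-k_2|\le1$ (values $1,2,1$) and reproduces exactly the opening summation in the proof of Proposition \ref{p:labelled}.
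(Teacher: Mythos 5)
Your proof is correct and follows essentially the same route as the paper's: both read a labelled acyclic orientation through the source-removal code of Theorem \ref{th:coding}, collapse the code into maximal runs of identical symbols (blocks of vertices from one part, consecutive blocks in distinct parts), count the run patterns by $X_{k_1,\ldots,k_p}$ and the placements of the $n_i$ labelled vertices into the $k_i$ ordered non-empty blocks by $k_i!{n_i \brace k_i}$, then sum over shapes. The only difference is one of rigor: you state and verify the underlying bijection in both directions, whereas the paper compresses this into a two-sentence appeal to the argument of Proposition \ref{p:labelled}.
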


\begin{proof}
 Notice that we can use the same argument as in Proposition \ref{p:labelled}. We may count the number of acyclic orientations of $K_{n_1,n_2,\ldots,n_p}$ by grouping them according to $k_{i-1}$, the number of groups of $i$'s codes (code $i$ associated to part with size $n_{i+1}$) separated by at least another group of code in $[p]$. 
 Then, consider the distinct ways to obtain $k_i$ groups out of $n_i$ codes $i-1$, \emph{i.e.}, ${n_i\brace k_i}$, and its corresponding permutation of the $k_i$ groups, \emph{i.e.}, $k_i!$.
\end{proof}

The length of the longest path in an acyclic orientation of a labelled complete multipartite graph was discussed in \cite{KLM}. In this direction, we have the following result below.
In an acyclic orientation of a complete multipartite graph, the longest directed paths always start from the part that includes the sources, and end at the part that includes the sinks. Note that a longest path cannot start from another part of the multipartite graph since a source makes it one edge longer. Analogously, a sink could make a path one edge longer if it doesn't end in the part that includes the sinks.
Moreover, the codification \eqref{eq:code_function}, given in Theorem \ref{th:coding}, gives a partition on the vertices of $\mathcal K$ induced by the equivalence relation $R_{\mathcal{K}}$ defined by: 
 
 \emph{Two vertices are related by $R_{\mathcal K}$ if they are sources in some subsequent acyclic orientation obtained during the source removing decomposition, {\it i.e.}, if the two vertices are represented in the code within a sub-string of consecutive and identical codes.}

 Notice that since the code assigned to $\mathcal K$ is unique when you fix the code assigned of each part, \emph{i.e.}, unique unless you consider permutation of their parts. Then, we may verify that $R_{\mathcal K}$ is an equivalence relation on the set of vertices of the complete multipartite graph. 
 Indeed, the code of $\mathcal K$ induces a total order $\prec_\mathcal{K}$ in the partition $V(K_{n_1,n_2,\ldots,n_p})/R_{\mathcal K}$ given by the order of appearance on the code of $\mathcal K$. Note that $|V(K_{n_1,n_2,\ldots,n_p})/R_{\mathcal K}|=k_1+k_2+\ldots+k_p$ whenever one of the codifications given by \eqref{eq:code_function} of $\mathcal K$ is counted in $X_{k_1,k_2,\ldots,k_p}$.

\begin{proposition}\label{p:length}
  The length of the longest path in an acyclic orientation $\mathcal K$ of a complete multipartite graph $K_{n_1,n_2,\ldots,n_p}$ is the size of the partition induced by $R_{\mathcal K}$ minus one, {\it i.e.}, 
  $|V(K_{n_1,n_2,\ldots,n_p})/R_{\mathcal K}|-1$.

  Furthermore, the number of longest paths in $\mathcal K$ is given by multiplying the sizes of each part of the partition induced by $R_{\mathcal K}$.
\end{proposition}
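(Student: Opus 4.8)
The plan is to read off a layered structure directly from the code \eqref{eq:code_function}. Listing the blocks of the partition $V(K_{n_1,\ldots,n_p})/R_{\mathcal K}$ in the total order $\prec_{\mathcal K}$ as $G_1,\ldots,G_m$ (so that $m=|V(K_{n_1,\ldots,n_p})/R_{\mathcal K}|$), each $G_j$ is a maximal run of a single symbol and hence lies entirely in one part; by maximality of the run, consecutive blocks $G_j$ and $G_{j+1}$ carry distinct symbols, so they lie in different parts and every vertex of $G_j$ is adjacent to every vertex of $G_{j+1}$. First I would record how the edges between blocks are oriented: I claim that if $u\in G_i$ and $v\in G_j$ with $i<j$ are adjacent, then the edge is oriented $u\to v$. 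Indeed, the order of the blocks in $\prec_{\mathcal K}$ is their order of appearance in the code, that is, the order in which their vertices are deleted in the source-removal process of Theorem \ref{th:coding}; thus $u$ is deleted before $v$, and at the step at which $u$ is removed it is a source of a sub-orientation that still contains $v$, whence $u\to v$. Consequently the deletion order is a topological order of $\mathcal K$, and since two vertices of a common block share a part (so are non-adjacent), every directed edge strictly increases the block index along $\prec_{\mathcal K}$.

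With this structural fact in hand the first assertion is immediate. Any directed path traverses strictly increasing blocks, hence meets each block at most once, so a path of length $\ell$ occupies $\ell+1$ distinct blocks and therefore $\ell\le m-1$; conversely, choosing one vertex from each block produces $w_1\to w_2\to\cdots\to w_m$ (consecutive vertices lie in different parts, hence are adjacent, and the edge points forward), a path of length $m-1$. Hence the longest path in $\mathcal K$ has length $m-1=|V(K_{n_1,\ldots,n_p})/R_{\mathcal K}|-1$.

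For the count I would argue that a longest path has exactly $m$ vertices and meets each block at most once, so it must meet every block exactly once, visiting $G_1,G_2,\ldots,G_m$ in order; conversely, as observed above, every transversal $(w_1,\ldots,w_m)$ with $w_j\in G_j$ gives a valid directed path of length $m-1$, and distinct transversals give distinct paths. This sets up a bijection between the longest paths and such transversals, so their number is $\prod_{j=1}^{m}|G_j|$, the product of the sizes of the blocks of the partition induced by $R_{\mathcal K}$.

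The main obstacle is really only the edge-orientation claim, and inside it the identification of the block order $\prec_{\mathcal K}$ with the deletion order together with the absence of edges within a block; both reduce to the fact that $R_{\mathcal K}$ groups vertices sharing a symbol, i.e.\ a common part of the multipartite graph, together with Lemma \ref{l:ss}. The well-definedness of the blocks $G_j$ independent of which source is deleted at each step is already guaranteed by $R_{\mathcal K}$ being an equivalence relation, as established before the statement, so no additional work is needed there.
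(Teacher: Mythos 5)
Your proof is correct and follows essentially the same route as the paper's: both arguments show that a directed path meets each block of $V(K_{n_1,n_2,\ldots,n_p})/R_{\mathcal K}$ at most once and that any transversal choosing one vertex per block is a directed path, yielding the length $m-1$ and the product count via the same bijection with transversals. The only difference is that you explicitly prove the forward-orientation claim (the deletion order is a topological order, so every edge increases the block index), a fact the paper's proof uses implicitly when it asserts that no directed path joins two vertices of the same block.
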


\begin{table}[h]
\centering
\setlength{\tabcolsep}{1mm} 
\def\arraystretch{1.0} 
\begin{tabular}{|c|c|}
\hline
\begin{tikzpicture}[thick,
  every node/.style={draw,circle},
  fsnode/.style={fill=myblue},
  ssnode/.style={fill=mygreen},
  every fit/.style={circle,draw,inner sep=2pt,text width=1.5cm},
  ->,shorten >= 3pt,shorten <= 3pt
]
\begin{scope}[start chain=going below]
\foreach \i in {1,2,3}
  \node[fsnode,on chain] (f\i) [label=left: \i] {};
\end{scope}
\begin{scope}[xshift=2.9cm,yshift=-.1cm,start chain=going below,node distance=7mm]
\foreach \i in {4}
  \node[ssnode,on chain] (s\i) [label=right: \i] {};
\end{scope}
\begin{scope}[xshift=2.9cm,yshift=-2.1cm,start chain=going below,node distance=7mm]
\foreach \i in {5}
  \node[ssnode,on chain] (t\i) [label=right: \i] {};
\end{scope}

\node [myblue,fit=(f1) (f3),label=above:$n_0$] {};
\node [mygreen,fit=(s4) ,label=above:$n_1$] {};
\node [mygreen,fit=(t5) ,label=below:$n_2$] {};

\draw (f1) -- (s4);
\draw (f1) -- (t5);
\draw (f2) -- (s4);
\draw (f2) -- (t5);
\draw (f3) -- (s4);
\draw (f3) -- (t5);
\draw (s4) -- (t5);
\end{tikzpicture}
&\begin{tikzpicture}[thick,
  every node/.style={draw,circle},
  fsnode/.style={fill=myblue},
  ssnode/.style={fill=mygreen},
  every fit/.style={circle,draw,inner sep=2pt,text width=1.5cm},
  ->,shorten >= 3pt,shorten <= 3pt
]
\begin{scope}[start chain=going below]
\foreach \i in {1,2,3}
  \node[fsnode,on chain] (f\i) [label=left: \i] {};
\end{scope}

\begin{scope}[xshift=2.9cm,yshift=-.1cm,start chain=going below,node distance=7mm]
\foreach \i in {4}
  \node[ssnode,on chain] (s\i) [label=right: \i] {};
\end{scope}

\begin{scope}[xshift=2.9cm,yshift=-2.1cm,start chain=going below,node distance=7mm]
\foreach \i in {5}
  \node[ssnode,on chain] (t\i) [label=right: \i] {};
\end{scope}

\node [myblue,fit=(f1) (f3),label=above:$n_0$] {};
\node [mygreen,fit=(s4) ,label=above:$n_1$] {};
\node [mygreen,fit=(t5) ,label=below:$n_2$] {};

\draw (f2) -- (s4);
\draw (s4) -- (t5);

\end{tikzpicture}
\\ \( 0.0.0.1.2\) 
\\ \hline
\begin{tikzpicture}[thick,
  every node/.style={draw,circle},
  fsnode/.style={fill=myblue},
  ssnode/.style={fill=mygreen},
  every fit/.style={circle,draw,inner sep=2pt,text width=1.5cm},
  ->,shorten >= 3pt,shorten <= 3pt
]
\begin{scope}[start chain=going below]
\foreach \i in {1,2,3}
  \node[fsnode,on chain] (f\i) [label=left: \i] {};
\end{scope}
\begin{scope}[xshift=2.9cm,yshift=-.1cm,start chain=going below,node distance=7mm]
\foreach \i in {4}
  \node[ssnode,on chain] (s\i) [label=right: \i] {};
\end{scope}
\begin{scope}[xshift=2.9cm,yshift=-2.1cm,start chain=going below,node distance=7mm]
\foreach \i in {5}
  \node[ssnode,on chain] (t\i) [label=right: \i] {};
\end{scope}

\node [myblue,fit=(f1) (f3),label=above:$n_0$] {};
\node [mygreen,fit=(s4) ,label=above:$n_1$] {};
\node [mygreen,fit=(t5) ,label=below:$n_2$] {};

\draw (s4) -- (f1);
\draw (f2) -- (s4);
\draw (f3) -- (t5);
\draw (t5) -- (f2);
\draw (t5) -- (f1);
\draw (f3) -- (s4);
\end{tikzpicture}
&\begin{tikzpicture}[thick,
  every node/.style={draw,circle},
  fsnode/.style={fill=myblue},
  ssnode/.style={fill=mygreen},
  every fit/.style={circle,draw,inner sep=2pt,text width=1.5cm},
  ->,shorten >= 3pt,shorten <= 3pt
]
\begin{scope}[start chain=going below]
\foreach \i in {1,2,3}
  \node[fsnode,on chain] (f\i) [label=left: \i] {};
\end{scope}
\begin{scope}[xshift=2.9cm,yshift=-.1cm,start chain=going below,node distance=7mm]
\foreach \i in {4}
  \node[ssnode,on chain] (s\i) [label=right: \i] {};
\end{scope}
\begin{scope}[xshift=2.9cm,yshift=-2.1cm,start chain=going below,node distance=7mm]
\foreach \i in {5}
  \node[ssnode,on chain] (t\i) [label=right: \i] {};
\end{scope}

\node [myblue,fit=(f1) (f3),label=above:$n_0$] {};
\node [mygreen,fit=(s4) ,label=above:$n_1$] {};
\node [mygreen,fit=(t5) ,label=below:$n_2$] {};

\draw (s4) -- (f1);
\draw (f2) -- (s4);
\draw (f3) -- (t5);
\draw (t5) -- (f2);
\end{tikzpicture}
\\ \( 0.2.0.1.0\) 
\\ \hline
\end{tabular}
\caption{Two acyclic orientations of $K_{3,1,1}$ and their corresponding longest paths.}
\label{tab:paths}
\end{table}

\begin{proof}
 Consider now a path $\mathcal P$ in $\mathcal K$. Notice that $\mathcal P$ contains at most 1 (one) vertex of each part of the partition; otherwise, there is not a directed path joining them. Moreover, a path containing vertices in each part of the given partition $V_{K_{n_1,n_2,\ldots,n_p}}/ R_{\mathcal K}$ contains the maximum number of vertices and therefore, is one of the longest directed paths in $\mathcal K$. Note that such a $\mathcal P$ is one of the longest paths in $\mathcal K$. Therefore, a longest path includes $k_1+\ldots+k_p$ vertices, and consequently, its length is $k_1+\ldots+k_p-1$. 
 

 Indeed, the number of longest paths in $\mathcal K$ is given by the multiplication of the size of each part of the partition induced by $R$ since we may choose a random vertex from each element of $V_{K_{n_1,n_2,\ldots,n_p}}/R$, see Table \ref{tab:paths}.
\end{proof}

Notice that we may use Proposition \ref{p:length} to investigate the distribution of the longest paths in the acyclic orientations of a complete multipartite graphs. 
Proposition \ref{p:length} and Gallai-Hasse-Roy-Vitaver theorem give the trivial result
\[
  \chi(k_{n_1,\ldots,k_p})=\displaystyle\min_{\mathcal{K}} |V(K_{n_1,n_2,\ldots,n_p})/R_{\mathcal K}|=p.
\]

\section{Acyclic orientations of a graph with labelled vertices}\label{sect_general}

In this Section we deal with the acyclic orientations of a general graph.  
First, notice that, such source removing decomposition defined in section above exists for every acyclic orientation of a given graph. Moreover, for each acyclic orientation $\mathcal K$ of a graph $G$, we can similarly define $R_{\mathcal K}$ as 

 \emph{Two vertices are related by $R_{\mathcal K}$ if they are sources in some subsequent acyclic orientation obtained during the source removing decomposition.}

Clearly, $R_{\mathcal K}$ defines a total order in the partition $V(G)/R_{\mathcal K}$ by considering the order of appearance within the source removing decomposition in $G$. Thus, we can define a total order in $V(G)$, by considering the order above as primary rule and a given (fixed) total order of the vertices as a secondary rule. For instance, in Table \ref{tab:paths}, the acyclic orientation coded $0.0.0.1.2$ has primary rule $n_0\prec n_1\prec n_2$ and we may consider as secondary rule the natural order in $V(G)=\{1,2,3,4,5\}$, then it induces the order $1\prec2\prec3\prec4\prec5$ in $V(G)$; however, the acyclic orientation coded $0.2.0.1.0$ is determined by the primary rule (\emph{i.e.}, $\{3\}\prec\{5\}\prec\{2\}\prec\{4\}\prec\{1\}$) which induces the order $3\prec5\prec2\prec4\prec1$ in $V(G)$. 

In other words, given a total order in $V(G)$ and an acyclic orientation of $G$, we may assign orientation to edges in $\overline G$ keeping the condition of the acyclic orientation, even until we obtain an acyclic orientation of a complete graph in a unique way, \emph{i.e.}, assigning orientation to each additional edges according to the given total order in $V(G)$. 

\begin{theorem}\label{thm:perm_code}
  For every acyclic orientation $\mathcal K$ of a graph $G$ with order $n$, we can label the vertices of $G$ as $\{1,2,3,\ldots,n\}$ and define a code $f(\mathcal K)$ as above, such that $f(\mathcal K)$ is a permutation of $(1,2,3,\ldots,n)$.
\end{theorem}

Note that the permutation code defined in Theorem \ref{thm:perm_code} could also be defined via the sink decomposition instead of the source decomposition. However, the sink code is not the reverse of the source code even if we place the sinks in decreasing order, for instance, the directed graph in Figure \ref{fig:SinkCode} has source permutation code $(1,2,3,4,5,6)$, however, its sink permutation code in decreasing order is $(6,5,2,4,3,1)$.

\begin{figure}[ht]
\setlength{\tabcolsep}{1mm} 
\def\arraystretch{1.0} 
\centering

\begin{tikzpicture}[thick,
  every node/.style={draw,circle},
  fsnode/.style={fill=myblue},
  ssnode/.style={fill=mygreen},
  every fit/.style={ellipse,draw,inner sep=-2pt,text width=2cm},
  ->,shorten >= 3pt,shorten <= 3pt
]
\begin{scope}[xshift=-3cm,yshift=0.5cm,start chain=going below,node distance=7mm]
\foreach \i in {1}
  \node[fsnode,on chain] (f\i) [label=left: \i] {};
\end{scope}

\begin{scope}[xshift=0cm,yshift=0.5cm,start chain=going below,node distance=9mm]
\foreach \i in {2,3,4}
  \node[fsnode,on chain] (r\i) [label=above: \i] {};
\end{scope}

\begin{scope}[xshift=3cm,yshift=0.5cm,start chain=going below,node distance=7mm]
\foreach \i in {5,6}
  \node[ssnode,on chain] (s\i) [label=right: \i] {};
\end{scope}

\draw (f1) -- (r2);
\draw (f1) -- (r3);
\draw (f1) -- (r4);
\draw (r3) -- (s5);
\draw (r4) -- (s6);
\end{tikzpicture}

\caption{Acyclic orientation with permutation (sources) code $(1,2,3,4,5)$ and permutation (sinks) code $(6,5,2,4,3,1)$}
\label{fig:SinkCode}
\end{figure}

The following two algorithms allows us to obtain the permutation code of an acyclic orientation of a general graph and the acyclic orientation associated to a permutation code whenever the code is the image of an acyclic orientation of a graph.

\begin{algorithm}\label{alg:coding}
   \emph{Permutation coding}
    \begin{description}
        \item[\it input:] \emph{$\mathcal K$, an acyclic orientation of a graph, \emph{e.g.}, $A_\mathcal{K}$, the adjacency matrix of $\mathcal K$}
        \item[\it output:] \emph{$v$ the permutation code of $\mathcal K$} 
    \end{description}

    \begin{description}
        \item[Step 0:] {Set the output vector $v$ as an empty list.}
        \item[Step 1:] {Identify all sources of $\mathcal K$ analyzing the vertices in order, and insert them into $v$ in the order they appear. If no sources is found then $\mathcal K$ is not an acyclic orientation, STOP.}
        \item[Step 2:] {Remove from $\mathcal K$ all identified sources in Step 1.} 
        \item[Step 3:] {If $K$ is the null graph, STOP, else go to Step 1.} 
    \end{description}
\end{algorithm}

The algorithm above is $\mathcal{O}(n^2)$, in a standard version using the adjacency matrix of the directed graph. However, the number of operations could be reduced if we consider the structure as follows, a new auxiliary vertex as the root of the tree linked to the sources of $\mathcal K$. Then, we obtain a directed tree, and listing the vertices of the tree by levels (without the root) we obtain the permutation code of $\mathcal K$.       

\begin{algorithm}\label{alg:decoding}
   \emph{Decoding a permutation code}
    \begin{description}
        \item[\it input:] \emph{a permutation code $v$, and a graph $G$, \emph{e.g.}, $A_G$ the adjacency matrix of $G$}
        \item[\it output:] \emph{the acyclic orientation $\mathcal K$ such that $f(\mathcal K)=v$ if it exits} 
    \end{description}

    \begin{description}
        \item[Step 0:] {Set the auxiliary lists \emph{prev} and \emph{current} as empty.}
        \item[Step 1:] {Insert in \emph{current} the consecutive vertices in $v$ that appear in increasing order and form an independent set of vertices.}
        \item[Step 2:] {Modify $A_G$ by providing orientation to all edges adjacent to \emph{current} by converting all vertices of \emph{current} in sources.}
        \item[Step 3:] {Remove the vertices in \emph{current} from $v$, \emph{prev}:=\emph{current} and empty \emph{current}.}
        \item[Step 4:] {If $v$ is empty, then STOP, else insert in \emph{current} the consecutive vertices in $v$ that appear in increasing order and form an independent set of vertices and has at least an adjacent vertex in \emph{prev}.} 
        \item[Step 5:] {If \emph{current} is empty, then $v$ is not a code of an acyclic orientation of $G$, STOP; else go to Step 2} 
    \end{description}
\end{algorithm}

For instance, for the two acyclic orientations $0.0.0.1.2$ and $0.2.0.1.0$ in Table \ref{tab:paths}, we have $f(0.0.0.1.2)=(1,2,3,4,5)$ and $f(0.2.0.1.0)=(3,5,2,4,1)$. The function $f$ is one-to-one. Note that the permutations of $(1,2,3,\ldots,n)$ are in a one-to-one correspondence with the acyclic orientations of the complete graph $K_n$ by orienting the edges according to the order induced by the permutation. Besides, from each acyclic orientation of $K_n$, a permutation code, we can remove the edges joining each pair of non-adjacent vertices in $G$ and obtain a unique acyclic orientation of $G$. However, not every permutation code is an image of an acyclic orientation of $G$ mapped by $f$ if $G\not\simeq K_n$. For instance, in the cycle $C_4$ with $V(C_4)=\{1,2,3,4\}$ and $E(C_4)=\{12,23,34,41\}$ the permutation code $(1,4,2,3)$ is not an image of $f$, although by removing its edges in $\overline{C_4}$ we obtain an acyclic orientation of $C_4$. Note that the acyclic orientation obtained in this case has image $(1,2,4,3)$ via $f$ since $2$ and $4$ are in the same class of equivalence induced by that obtained acyclic orientation of $C_4$. Similarly, we may use this idea of the permutation code of an acyclic orientation of $G$ and removing edges from an acyclic orientation of a supergraph of $G$ to obtain the following result.

\begin{proposition}\label{prop:subgraphs}
  Let $G$ and $H$ be two graphs such that $G$ is a subgraph of $H$. Then, the number of acyclic orientations of $G$ is less than or equal to the number of acyclic orientations of $H$. 
  
  Furthermore, if $G$ is a proper subgraph of $H$, then the inequality is strictly less than.
\end{proposition}

For every graph $G$ with labelled vertices, we define the number of acyclic orientations of $G$, by $A(G)$. Hence, we have the following result that gives sharp bounds for $A(G)$.

\begin{theorem}\label{th:wbounds}
    Let $G$ be a connected graph of order $n$. Then, we have
    \[
     2^{n-1}\leq A(G)\leq n!
    \]
\end{theorem}

\begin{proof}
    On one hand, it is clear that $G$ is a subgraph of $K_n$. Thus, by Proposition \ref{prop:subgraphs} we have that $A(G)\leq A(K_n)=n!$. On the other hand, consider $T$ a spanning tree of $G$. So, $T$ is a subgraph of $G$ that has no cycles, therefore, by assigning orientation to each edge of $T$ we always obtain an acyclic orientation of $T$ since trees have no cycles. Indeed, $A(T)=2^{n-1}$ since the size of $T$ is $n-1$ and each edge has two options of orientating. Then, by Proposition \ref{prop:subgraphs} we have $2^{n-1}=A(T)\leq A(G)$. 
\end{proof}

It is clear that a graph with a proper coloring and with the minimum number of colors such that the monochromatic parts have sizes $n_1,n_2,\ldots,n_p$, respectively, is a subgraph of the complete multipartite graph
 $K_{n_1,n_2,\ldots,n_p}$. Therefore, Proposition \ref{prop:subgraphs} has the following consequence. 

\begin{proposition}\label{l:UpperB}
    Let $G$ be a graph with chromatic number $p$. Consider there is a proper coloring of $G$ with the minimum number of colors such that the monochromatic parts of the coloring have sizes $n_1,n_2,\ldots,n_p$, respectively. Then, we have

    \begin{equation}
        A(G)\leq B_{n_1,n_2,\ldots,n_p}.
    \end{equation}

    Furthermore, the equality is attained if and only if $G\simeq K_{n_1,n_2,\ldots,n_p}$. 
\end{proposition}

Notice that for some graphs there are many proper colorings with the minimum number of colors, so the result above could be written as a minimum over several $B_{n_1,n_2,\ldots,n_p}$.  
In addition, the Proposition \ref{l:UpperB} has the following consequence.

\begin{proposition}\label{prop:Bounds}
    Let $G$ be a graph with chromatic number $p$ and independence number $r$. Then, we have
    \begin{equation}\label{eq:101}
        A(G)\leq B_{p\times r}
    \end{equation}
   Furthermore, the equality is attained if and only if $G$ is isomorphic to the Tur\'an graph of order $pr$ and $p$ parts. 
\end{proposition}

\begin{proof}
    Consider a proper coloring in $G$ with the minimum number of colors and size of the parts $n_1,n_2,\ldots,n_p$, respectively. Note that in every proper coloring of a graph all monochromatic parts are an independent set of vertices; therefore, each of those parts has size less than or equal to the independence number of $G$. Indeed, we have $n_i\leq r$ for every $1\leq i\leq p$. By \eqref{eq:acyc_labelled} we have that $B_{n_1,n_2,\ldots,n_p}\leq B_{m_1,m_2,\ldots,m_p}$ if $n_i\leq m_i$ for every $1\leq i\leq p$. Therefore, by Proposition \ref{l:UpperB} we have
    \[
     A(G)\leq B_{n_1,n_2,\ldots,n_p}\leq B_{p\times r}.
    \]
    Clearly, we have that $K_{\underbrace{r,r,\ldots,r}_{p \text{ times}}}$ has chromatic number $p$, independence number $r$ and provide the equality of \eqref{eq:101}; on the other hand, if $G$ is not isomorphic to such a Tur\'an graph, then $G$ is a proper subgraph of the Tur\'an graph and Proposition \ref{prop:subgraphs} gives a strict inequality. 
\end{proof}

\section*{Acknowledgements}

The first author was supported by a grant from Agencia Estatal de Investigaci\'on (PID2019-106433GB-I00 /AEI/10.13039/501100011033), Spain.

\end{document}